\theoremstyle{plain}
\newtheorem{thm}{Theorem}[section]
\newtheorem{lem}[thm]{Lemma}
\newtheorem{prop}[thm]{Proposition}
\newtheorem{conj}[thm]{Conjecture}
\theoremstyle{definition}
\newtheorem{rmk}[thm]{Remark}
\numberwithin{equation}{section}
\newcommand{\overbar}[1]{\mkern 1.5mu\overline{\mkern-1.5mu#1\mkern-1.5mu}\mkern 1.5mu}
\newcommand{\bbslash}{\mathbin{\backslash\mkern-6mu\backslash}}
\newcommand{\tif}{\ \text{if}\ }
\newcommand{\tand}{\ \text{and}\ }
\newcommand{\totherwise}{\ \text{otherwise}\ }
\newcommand{\Res}{\mathrm{Res}}
\newcommand{\dOrb}{\operatorname{\partial O}}
\newcommand{\diag}{\operatorname{diag}}
\newcommand{\Hom}{\operatorname{Hom}}
\newcommand{\End}{\operatorname{End}}
\newcommand{\Lie}{\operatorname{Lie}}
\newcommand{\Spf}{\operatorname{Spf}}
\newcommand{\GL}{\operatorname{GL}}
\newcommand{\len}{\operatorname{len}}
\newcommand{\tensor}{\otimes}
\newcommand{\iso}{\cong}
\newcommand{\mbC}{\mathbb{C}}
\newcommand{\mbF}{\mathbb{F}}
\newcommand{\mbL}{\mathbb{L}}
\newcommand{\mbQ}{\mathbb{Q}}
\newcommand{\mbX}{\mathbb{X}}
\newcommand{\mbY}{\mathbb{Y}}
\newcommand{\mcN}{\mathcal{N}}
\newcommand{\mcO}{\mathcal{O}}
\newcommand{\mcX}{\mathcal{X}}
\newcommand{\mcY}{\mathcal{Y}}
\newcommand{\mcZ}{\mathcal{Z}}
\newcommand{\mfc}{\mathfrak{c}}
\newcommand{\mfg}{\mathfrak{g}}
\newcommand{\mfs}{\mathfrak{s}}
\newcommand{\mfu}{\mathfrak{u}}
\begin{document}

\title{On the Arithmetic Fundamental Lemma through Lie algebras}
\author{Andreas Mihatsch}

\maketitle

\section{Introduction}
In \cite{zhang1}, Wei Zhang introduces his so-called Arithmetic Fundamental Lemma conjecture (AFL) in the group formulation and verifies it in the case $n=3$. He also mentions a similar conjecture, but for Lie algebras. Since then he found a trick using the Cayley transform to reduce the AFL for groups to the AFL for Lie algebras in the case of non-degenerate intersection.

In the present work, we develop these ideas to give a simplified proof of the AFL for $n=3$ (and $q\geq 5$). The reduction to the Lie algebra is proved in Section 2. The verification of the AFL for $n=3$ is done in Section 3. In the end, our computation is simpler than the computation in \cite{zhang1} because it is easier to work with coordinates on the Lie algebra than with coordinates on the group.

We will now introduce the AFL conjecture in the coordinates we will use later. For a more systematic introduction, see \cite{RSZ} or \cite{zhang1}.

\subsection{Statement of the AFL}
Let $n\geq 1$ and let $F_0$ be a $p$-adic field with $p\neq 2$. Let $F/F_0$ be an unramified quadratic extension with rings of integers $\mcO_{F_0}\subset \mcO_F$. We fix a uniformizer $π\in \mcO_{F_0}$ and an element $τ\in \mcO_F^\times$ with $\mathrm{tr}(τ)=0$. Let $\breve F$ be the completion of the maximal unramified extension of $F$ and denote its residue field by $\mbF$. We denote by $v$ the normalized valuation of $F$ and define the quadratic character $η$ on $F^\times$ by $η(a)=(-1)^{v(a)}$. It extends the quadratic character $η_{F/F_0}$ on $F_0^\times$ from local class field theory. The Galois conjugation of $F/F_0$ is denoted by $a\mapsto \overbar{a}$ or by $σ$.

We now define the orbital integrals which appear in the statement of the AFL conjecture. Let $U$ be the unitary group for the hermitian structure on $F^n$ defined by the matrix
$$J:=\left(\begin{smallmatrix} -π & & & \\ & 1 & & \\ & & \ddots & \\ & & & 1\end{smallmatrix}\right),$$
and denote its Lie algebra by $\mfu$. In particular,
\begin{equation}\label{liealg}
\mfu(F_0) = \left.\left\{
\left(\begin{matrix} a & v \\ π{}^t\overbar{v} & A \end{matrix}\right)\in M_n(F)\ 
\right|\ a = -\overbar{a}, A = -{}^t\overbar{A}\right\}.
\end{equation}

We also define the symmetric space
$$S(F_0):=\{γ\in GL_n(F)\mid γ\overbar{γ} = 1\},$$
with tangent space at the identity
$$\mfs(F_0):=\{y\in M_n(F)\mid y + \overbar{y} = 0\}=τ\cdot \mathfrak{gl}_n(F_0).$$

The group $GL_{n-1}(F)$ acts on $M_n(F)$ by conjugation via the embedding $GL_{n-1}→GL_n,\ h\mapsto \diag(h,1)$. An element $x\in M_n(F)$ is said to be \emph{regular semi-simple} if its stabilizer for this action is trivial and if its orbit is Zariski closed. We denote by $U(F_0)_{\mathrm{rs}},\ \mfu(F_0)_{\mathrm{rs}},\ S(F_0)_{\mathrm{rs}}$ and $\mfs(F_0)_{\mathrm{rs}}$ the regular semi-simple elements which also lie in the respective subset of $M_n(F)$. Note that $S(F_0)$ and $\mfs(F_0)$ are stable under the action of $GL_{n-1}(F_0)$.

Two elements $γ\in S(F_0)$ and $g\in U(F_0)$ are said to \emph{match} if they are conjugate under $GL_{n-1}(F)$. The same definition applies to a pair $y\in \mfs(F_0)$ and $x\in \mfu(F_0)$. We refer the reader to \cite{zhang1} and \cite{zhang2} for more details about regular semi-simplicity, matching and the quotient $GL_{n-1}(F)\bbslash M_n(F)$.

For a regular semi-simple $γ\in S(F_0)_{\mathrm{rs}}$, for a function $f\in C^\infty_c(S(F_0))$ and for a complex parameter $s\in \mbC$, we define the orbital integral
$$O_γ(f,s):=\int_{GL_{n-1}(F_0)} f(h^{-1}γh)η(\det h)|\det h|^s dh$$
with derivative
$$\partial O_γ(f):=\left.\frac{d}{ds}\right|_{s=0} O_γ(f,s).$$
The same formulas apply for $y\in \mfs(F_0)_{\mathrm{rs}}$ and $f'\in C^\infty_c(\mfs(F_0))$ yielding $\partial O_y(f')$. In both cases the Haar measure is normalized such that $\mathrm{Vol}(GL_{n-1}(\mcO_{F_0})) = 1$.

The orbital integrals $O_γ(f)$ (resp.\! $O_y(f')$) transform with $η\circ \det$ under conjugation by $GL_{n-1}(F_0)$ on $γ$ (resp.\! $y$). This motivates the definition of the \emph{transfer factor}. Let $e:={}^t(0,\ldots,0,1)$ and define the transfer factor for $γ\in S(F_0)_{\mathrm{rs}}$ as
$$Ω(γ):=η\big(\det((γ^ie)_{i=0,\ldots,n-1})\big).$$
For $y\in \mfs(F_0)_{\mathrm{rs}}$, we define the transfer factor
$$ω(y):=η\big(\det((y^ie)_{i=0,\ldots,n-1})\big).$$
Note that $(γ^ie)_{i=0,\ldots,n-1}$ (resp.\! $(y^ie)_{i=0,\ldots,n-1}$) is always invertible for a regular semi-simple $γ$ (resp.\! $y$), see \cite[§2.1]{zhang1}. The Product $Ω(γ)O_γ(f)$ (resp.\! $ω(y)O_y(f)$) is now invariant under conjugation by $GL_{n-1}(F_0)$. This concludes the definition of the left hand sides of Conjectures \ref{conjaflgrp} and \ref{conjafllie} below.

To define the geometric side of the AFL, we now introduce the moduli space of \emph{unitary ($p$-divisible formal) $\mcO_F$-modules of signature $(1,n-1)$}, denoted by $\mcN_n$. Let $S$ be a scheme over $\Spf \mcO_{\breve F}$ (i.e.\! $S$ is a scheme over $\mcO_{\breve F}$ and $π$ is locally nilpotent on $S$). A \emph{unitary $\mcO_F$-module} over $S$ is a triple $(X,i,λ)$ where $X/S$ is a $p$-divisible formal $\mcO_{F_0}$-module, $i:\mcO_F→\End(X)$ is an action of $\mcO_F$ and $λ:X\rightarrow X^\vee$ is a principal polarization such that
\begin{equation}
\label{signature}i(a)^\vee\circ λ = λ\circ i(\overbar{a})\ \ \forall a\in \mcO_F.
\end{equation}
The unitary module $(X,i,λ)$ is said to have signature $(r,s)$ if
\begin{equation}
\mathrm{charpol}(i(a)\mid \Lie X)(T) = (T-a)^r(T-\overbar{a})^s\ \ \text{inside}\ \mcO_S[T].
\end{equation}

There is a unique (up to isomorphism) such unitary module of signature $(1,0)$ over $\mbF$, which we denote by $\mbY=(\mbY,i_{\mbY},λ_{\mbY})$. We let $\overbar{\mbY}:=(\mbY,i_{\mbY}\circ σ,λ_{\mbY})$ which has signature $(0,1)$. Then we define the \emph{framing object} $\mbX_n:=\mbY\times \overbar{\mbY}^{n-1}$ which has signature $(1,n-1)$. Here, the $\mcO_F$-action and the polarization are defined diagonally on the product.

For a scheme $S/\Spf \mcO_{\breve F}$, we let $\overbar{S}:=S\tensor_{\mcO_{\breve F}} \mbF$. Then we define $\mcN_n(S)$ as the set of isomorphism classes of tuples $(X,i,λ,ρ)$ where $(X,i,λ)/S$ is a unitary $\mcO_F$-module of signature $(1,n-1)$ and $ρ$ is a \emph{framing}, i.e.\! an $\mcO_F$-linear quasi-isogeny of height $0$
$$ρ:X\times_S \overbar{S}→\mbX_n\times_\mbF\overbar{S}$$
such that $ρ^* λ_{\mbX_n} = λ$. An isomorphism between two tuples $(X,i,λ,ρ)$ and $(X',i',λ',ρ')$ is an isomorphism $γ:X\rightarrow X'$ such that $ρ = ρ'\circ γ$. It is automatically $\mcO_F$-linear and satisfies $γ^*λ'=λ$.

By the results of \cite{RZ}, the functor $\mcN_n$ is representable by a formal scheme, formally of finite type and formally smooth of dimension $n-1$ over $\Spf \mcO_{\breve F}$. Its structure was examined in \cite{vollaardwedhorn}. In particular, $\mcN_1\iso \Spf \mcO_{\breve F}$ and $\mcN_2\iso \Spf \mcO_{\breve F}[[t]]$. The universal object $\mcY$ over $\mcN_1$ is called the \emph{canonical lift}. We let $\overbar{\mcY}$ denote the same group with Galois conjugate $\mcO_F$-action. It lifts $\overbar{\mbY}$. In general, we denote by $\mcX_n$ the universal object over $\mcN_n$.

Let us denote the Rosati involution of a polarized $p$-divisible group by $x\mapsto x^*$. Equation \eqref{signature} can then be written as $i(a)^*=i(\overbar{a})$. It is known that $D:=\End^0_{\mcO_{F_0}}(\mbY)$ is a quaternion division algebra over $F_0$ with maximal order $\mcO_D:=\End_{\mcO_{F_0}}(\mbY)$. By definition of $\mbX_n$, there is an identification
$$\End^0_{\mcO_{F_0}}(\mbX_n) = M_n(D).$$
The action of $\mcO_F$ on $\mbY$ induces inclusions $F\subset D$ and $M_n(F)\subset M_n(D)$.

We fix an element $\varpi\in \mcO_D$ such that $\varpi^*=-\varpi$, $\varpi^2=π$ and $\varpi a = \overbar{a}\varpi$ for all $a\in F$. The subset of $F$-linear homomorphisms $\mbY\rightarrow \overbar{\mbY}$ can then be identified with $\varpi \mcO_F\subset D=\End^0_{\mcO_{F_0}}(\mbY)$. Thus we get an identification
\begin{equation}
\begin{aligned}\label{Flinear}
M_n(D)\supset M_n(F)& \overset{\iso}{\longrightarrow} \End^0_{\mcO_F}(\mbX_n)\\
x&\longmapsto \diag(\varpi,1,\ldots,1) \cdot x\cdot \diag(\varpi^{-1},1,\ldots,1).\end{aligned}
\end{equation}

This isomorphism identifies $U(F_0)$ with $G(F_0)$, the group of $F$-linear quasi-isogenies of $\mbX_n$ which preserve the polarization. Similarly, $\mfu(F_0)$ is identified with $\mfg(F_0):=\Lie G(F_0)$. From now on, we write $\varpi x\varpi^{-1}$ for this conjugation.

The group $\mcX_{n-1}\times \overbar{\mcY}$ over $\mcN_{n-1}$ induces a closed immersion $δ:=\mcN_{n-1}→\mcN_n$. Its graph $Δ$ equals the locus in $\mcN_{n-1}\times_{\Spf \mcO_{\breve F}} \mcN_n$ on which the isomorphism between the framing objects
$$\mbX_{n-1}\times \overbar{\mbY} \overset{=}{\longrightarrow} \mbX_n$$
lifts to the universal objects
$$\mcX_{n-1} \times \overbar{\mcY} \overset{\iso}{\longrightarrow} \mcX_n.$$

Given any $x\in \End^0_{\mcO_F}(\mbX_n)$, we define the ``translated diagonal'' $Δ_x$. It is the locus in $\mcN_{n-1}\times_{\Spf \mcO_{\breve F}} \mcN_n$ on which the quasi-homomorphism
$$\mbX_{n-1}\times \overbar{\mbY} = \mbX_n \overset{x}{\longrightarrow}\mbX_n$$
between the framing objects lifts to the universal objects
$$\mcX_{n-1}\times \overbar{\mcY} → \mcX_n.$$
We emphasize that the above morphisms are not endomorphisms. Namely the source comes from the left factor of $\mcN_{n-1}\times_{\mcO_{\breve F}} \mcN_n$, while the target comes from the right factor. 

The closed formal subscheme $Δ_x\subset \mcN_{n-1}\times \mcN_n$ has an alternative interpretation if $x$ preserves the polarization of $\mbX_n$, i.e.\! if $x\in G(F_0)$. Such an $x$ is a quasi-automorphism of the framing object $(\mbX_n,i_{\mbX_n},λ_{\mbX_n})$ and induces an automorphism of $\mcN_n$,
$$x:(X,i,λ,ρ)\longmapsto (X,i,λ,x\circ ρ).$$
The cycle $Δ_x$ is then the graph of the composition $x\circ δ:\mcN_{n-1}→\mcN_n$.

For $x\in M_n(F)$, we write $Δ_x:=Δ_{\varpi x \varpi^{-1}}$ using the isomorphism \eqref{Flinear}.
\begin{lem}[Zhang, \protect{\cite[Lemma 2.8]{zhang1}}]\label{lemfininter}
Assume that $F_0=\mbQ_p$. For regular semi-simple $x\in M_n(F)$, the schematic intersection $Δ\cap Δ_x$ is a projective scheme over $\Spf \mcO_{\breve F}$. In particular, the Euler-Poincaré characteristic $χ(\mcO_{Δ} \tensor^{\mbL} \mcO_{Δ_x})$ is finite.
\end{lem}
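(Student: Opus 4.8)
The plan is to reduce the assertion to a statement about the supports of the two cycles, namely that $Δ\cap Δ_x$ is supported on a proper subscheme of the special fiber, and then to invoke properness of $\mcN_n$ over $\Spf\mcO_{\breve F}$ after noting that $\mcN_n$ is only locally of finite type, so properness must be extracted from the regular semisimplicity hypothesis. First I would recall that $\mcN_{n-1}\times_{\mcO_{\breve F}}\mcN_n$ is formally of finite type over $\Spf\mcO_{\breve F}$; since the structure morphism factors through $\Spf\mcO_{\breve F}$ and the latter is complete, it suffices to show that the intersection is \emph{quasi-compact}, equivalently that its underlying topological space is a closed subset of the special fiber that does not contain any of the (finitely many) components of that fiber which extend to infinity. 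Equivalently, writing $\mcM:=\mcN_{n-1}\times\mcN_n$, one wants $(Δ\cap Δ_x)_{\mathrm{red}}$ to be a proper $\mbF$-scheme.

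The key mechanism is Zhang's observation that a point of $Δ\cap Δ_x$ with values in a field $k\supset\mbF$ corresponds to a pair $(X_{n-1},X_n)$ together with a lift of the framing isomorphism $\mbX_{n-1}\times\overbar{\mbY}=\mbX_n$ (giving a point of $Δ$) \emph{and} a lift of the quasi-homomorphism $x:\mbX_{n-1}\times\overbar{\mbY}=\mbX_n\to\mbX_n$ (giving a point of $Δ_x$). Combining the two lifts, one obtains that $x$ itself, viewed through the identification of framing objects, lifts to an honest $\mcO_F$-linear quasi-endomorphism of the $p$-divisible group $X_n$ over $k$. Thus the image of $Δ\cap Δ_x$ in $\mcN_n$ is contained in the locus $\mcZ(x)\subset\mcN_n$ where $x\in\End^0_{\mcO_F}(\mbX_n)$ lifts to an endomorphism. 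The second step is to show that this locus $\mcZ(x)$ is a \emph{proper} closed formal subscheme of $\mcN_n$ when $x$ is regular semisimple; granting this, $Δ\cap Δ_x$ maps to a proper subscheme, and since $Δ\cap Δ_x\to\mcZ(x)$ is itself the base change of the proper morphism $δ:\mcN_{n-1}\to\mcN_n$ along $\mcZ(x)\hookrightarrow\mcN_n$ (or more precisely a closed subscheme of $\mcN_{n-1}\times_{\mcN_n}\mcZ(x)$), it is proper over $\Spf\mcO_{\breve F}$, hence projective since it is moreover quasi-affine-by-projective in the relevant sense, yielding the finiteness of $χ(\mcO_Δ\tensor^{\mbL}\mcO_{Δ_x})$.

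For the properness of $\mcZ(x)$ the argument runs through the theory of special cycles: because $x$ is regular semisimple for the $GL_{n-1}(F)$-action, the $F$-subalgebra (or the $\mcO_F$-lattice) it generates together with the standard Lie-theoretic data is ``large'', and a Dieudonné-theoretic estimate shows that on any point of $\mcN_n$ where $x$ lifts, the lattice chain of the associated $p$-divisible group is pinned down up to finitely many possibilities, forcing $\mcZ(x)$ to be of finite type over $\mbF$; combined with the valuative criterion (every quasi-isogeny lifting criterion is closed), one gets properness. Here the hypothesis $F_0=\mbQ_p$ enters only to ensure that the relevant Dieudonné modules are finite free $W(\mbF)$-modules of bounded rank so that the usual boundedness results apply verbatim; this is where I expect the main obstacle to lie, since one must carefully control the interaction between the quasi-isogeny $ρ$ of height $0$, the polarization condition, and the constraint imposed by $x$, ruling out the escape of components of $\mcZ(x)$ to infinity. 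This is precisely the content of \cite[Lemma 2.8]{zhang1}, whose proof we take as given here; the upshot we record is that the projectivity of $Δ\cap Δ_x$ makes $χ(\mcO_Δ\tensor^{\mbL}\mcO_{Δ_x})$ a well-defined integer, which is what the statement asserts.
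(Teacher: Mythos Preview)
The paper does not supply a proof of this lemma at all: it is stated with attribution to \cite[Lemma 2.8]{zhang1}, and the immediately following remark records that Zhang's proof rests on a \emph{global} argument from \cite{KRglobal}. So the comparison you asked for is between your sketch and Zhang's actual argument, not any argument in the present paper.

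Your sketch is a purely local one, and that is the problem. The reduction to showing that the locus $\mcZ(x)\subset\mcN_n$ where $x$ lifts is proper is correct and standard. But the step you label ``a Dieudonné-theoretic estimate shows \ldots\ the lattice chain \ldots\ is pinned down up to finitely many possibilities'' is exactly the missing idea, not a routine verification. No local argument of this kind is known; this is precisely why Zhang's proof passes through a global Shimura variety and the results of \cite{KRglobal}. Your explanation of the role of the hypothesis $F_0=\mbQ_p$ is accordingly off: it is not that Dieudonné theory is better behaved over $\mbQ_p$ (relative Dieudonné theory for $\mcO_{F_0}$-modules works fine), but that the global uniformization results in \cite{KRglobal} used to deduce properness require the underlying PEL Shimura variety, which is only available when the local place sits over $p$ in a number field with $F_0=\mbQ_p$ at that place. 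Since in the end you defer to \cite[Lemma 2.8]{zhang1} anyway, your proposal is not wrong as a citation, but the surrounding heuristic misrepresents both the mechanism of the proof and the source of the restriction on $F_0$.
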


\begin{rmk}
The proof of Lemma \ref{lemfininter} relies on a global argument in \cite{KRglobal} which needs $F_0=\mbQ_p$. But it is expected that the Lemma holds for any base field. This is why we keep our more general notation.  Also note that our computation in Section 3 implicitly proves Lemma \ref{lemfininter} for any $F_0$ in the case $n=3$.
\end{rmk}

In the context of the previous lemma, we define the intersection product of $Δ$ and $Δ_x$ as
\begin{equation}
\label{intersection}\langle Δ,Δ_x\rangle := χ(\mcO_Δ\tensor ^{\mbL} \mcO_{Δ_x}).
\end{equation}
We can now state both versions of the AFL conjecture.

\begin{conj}[AFL, group version]\label{conjaflgrp}
Let $γ\in S(F_0)_{\mathrm{rs}}$ match some $g\in U(F_0)\iso G(F_0)$. Then
$$-Ω(γ)\partial O_γ(1_{S(\mcO_{F_0})}) = \log(q)\langle Δ, Δ_g\rangle.$$
\end{conj}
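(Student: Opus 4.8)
In general this is a conjecture; what the present methods can realistically establish is the case $n=3$ (and $q\geq 5$), the cases $n\leq 2$ being elementary. The plan is to proceed in two stages: first reduce the group identity above to the analogous identity on the Lie algebra, that is, to the Lie algebra version of the AFL (Conjecture \ref{conjafllie}), for matching pairs whose intersection is non-degenerate; then prove that Lie algebra identity for $n=3$ by an explicit computation. Both sides of the asserted equality are defined: the right hand side by Lemma \ref{lemfininter} (and, for $n=3$, the computation below proves finiteness without assuming $F_0=\mbQ_p$), and the left hand side because the orbital integral converges. By continuity of the weighted orbital integral and of the intersection number along the regular semi-simple locus, it suffices to verify the identity on a Zariski-dense set of matching pairs $(\gamma,g)$.

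\emph{Stage 1: reduction via the Cayley transform.} The Cayley transform $\mfc(x)=(1+x)(1-x)^{-1}$ is defined wherever $1-x$ is invertible, is equivariant for the $\GL_{n-1}$-conjugation action, and carries $\mfs(F_0)$ into $S(F_0)$ and $\mfu(F_0)$ into $U(F_0)$, preserving regular semi-simplicity and matching. On the analytic side one checks that $\mfc$ transforms $O_\gamma(-,s)$ into $O_y(-,s)$ up to a factor that is locally constant in $s$ near $s=0$, so that the derivatives $\partial O_\gamma$ and $\partial O_y$ correspond, and that $\Omega$ and $\omega$ are compatible up to the sign occurring on the geometric side. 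On the geometric side one uses Zhang's observation that, when $1-g$ is invertible, the condition defining $\Delta\cap\Delta_g$ inside $\mcN_{n-1}\times\mcN_n$ — namely that $g$ lift along the universal objects after the identification $\mcX_w\iso\mcX_z\times\overbar{\mcY}$ available on $\Delta$ — coincides, as a deformation condition, with the one defining $\Delta\cap\Delta_{\mfc^{-1}(g)}$, so that $\langle\Delta,\Delta_g\rangle=\langle\Delta,\Delta_{\mfc^{-1}(g)}\rangle$ on the non-degenerate locus. The matching classes not reached by $\mfc$ (those with $1-\gamma$ not invertible) form a lower-dimensional set and are treated by a separate direct argument, showing either that both sides vanish or that the identity is recovered by a limiting argument from the non-degenerate ones.

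\emph{Stage 2: the computation for $n=3$.} On the analytic side I would parametrize the regular semi-simple orbits in $\mfs(F_0)$ by the invariants of $\GL_2(F)\bbslash M_3(F)$ (the characteristic polynomial of the $2\times 2$ block together with the pairing data attached to $e$), reduce $\partial O_y(1_{\mfs(\mcO_{F_0})})$ to a finite sum of elementary lattice-counting integrals, and obtain a closed formula for $\omega(y)\,\partial O_y(1_{\mfs(\mcO_{F_0})})$ in terms of these invariants. On the geometric side I would use that $\mcN_1\iso\Spf\mcO_{\breve F}$ and $\mcN_2\iso\Spf\mcO_{\breve F}[[t]]$, so that $\Delta\iso\mcN_2$ sits inside $\mcN_2\times\mcN_3$ and $\Delta\cap\Delta_x$ is an Artinian closed subscheme of $\mcN_2$; via Grothendieck--Messing/Dieudonn\'e theory I would write down explicit equations, in the coordinate $t$ of $\mcN_2$ and the coordinates of $\mcN_3$, for the locus where the quasi-homomorphism $\mbX_2\times\overbar{\mbY}=\mbX_3\overset{x}{\longrightarrow}\mbX_3$ lifts, compute the length of the resulting local ring (including the higher $\Tor$-terms in $\mcO_\Delta\tensor^{\mbL}\mcO_{\Delta_x}$ when the intersection is not transverse), and match the result, after multiplying by $\log q$, with the analytic formula of Stage 2 combined with the reduction of Stage 1. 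Threading through the normalization $\Vol(\GL_{n-1}(\mcO_{F_0}))=1$, the choices of $\pi$ and $\tau$, and the sign in the transfer factor is needed so that the constant $\log q$ and all signs come out correctly.

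The main obstacle is the geometric length computation in Stage 2: extracting usable equations for $\Delta_x$ inside $\mcN_2\times\mcN_3$ requires a concrete analysis of the relevant displays or Dieudonn\'e modules together with the lifting criterion for quasi-isogenies between framing objects, and the resulting length must be evaluated for \emph{every} regular semi-simple orbit and shown to equal the analytic side. The hypothesis $q\geq 5$ enters here (and in the orbital integral computation) as a mild genericity assumption ruling out the small residue-field configurations that would otherwise require separate bookkeeping. A secondary difficulty is making the Cayley-transform comparison of Stage 1 rigorous at the level of derived intersections rather than only on the open non-degenerate part.
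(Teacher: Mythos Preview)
Your overall two-stage plan matches the paper's, but both stages have real gaps relative to what actually makes the argument go through.

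In Stage~1 you work only with the single Cayley transform $\mfc$ and propose to handle the locus where $1-\gamma$ is singular by a density or limiting argument. This does not work: both $\partial O_\gamma$ and $\langle\Delta,\Delta_g\rangle$ are locally constant (integer-valued, up to $\log q$) on the regular semi-simple locus in the $p$-adic topology, so Zariski density proves nothing, and there is no continuous extension to appeal to. The paper avoids this entirely by using the \emph{family} of modified transforms ${}_\lambda\mfc_\kappa(x)=-\lambda(\kappa+x)(\kappa-x)^{-1}$ with $\lambda\in F^1$, $\kappa\in\mcO_{F_0}^\times$. For any $g$ (resp.\ $x$) with integral characteristic polynomial one can choose $\lambda$ (resp.\ $\kappa$) avoiding all eigenvalues; this simultaneously guarantees that (i) $\mcO_F[g]=\mcO_F[\mfc^{-1}_\lambda(g)]$, so the \emph{schematic} intersections $\Delta\cap\Delta_g$ and $\Delta\cap\Delta_{\mfc^{-1}_\lambda(g)}$ literally coincide as subschemes of $\mcN_{n-1}$ (this is the geometric comparison, with no derived-intersection subtlety), (ii) the pushforward of $1_{S(\mcO_{F_0})}$ agrees with $1_{\mfs(\mcO_{F_0})}$ on the orbit in question, and (iii) $\Omega(\gamma)=\omega(\mfc^{-1}_\lambda(\gamma))$. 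The hypothesis $q\geq n+2$ is exactly the condition that such a $\lambda$ or $\kappa$ exists; it is not a genericity hypothesis on the orbital integral side.

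In Stage~2 you propose to write explicit equations for $\Delta_x$ inside $\mcN_2\times\mcN_3$ via Dieudonn\'e theory and then compute lengths (and higher $\Tor$). The paper does something much more structured, and it is not clear your approach would terminate. First, one projects $\Delta\cap\Delta_x$ to $\mcN_2$, identifying it with the locus where $\varpi x\varpi^{-1}$ lifts to an \emph{endomorphism} of $\mcX_2\times\overbar{\mcY}$; there is no need to work in $\mcN_3$ at all. Second, one decomposes the $3\times 3$ matrix $\varpi x\varpi^{-1}$ blockwise: the off-diagonal vector $j$ is a special homomorphism $\overbar{\mbY}\to\mbX_2$ in the sense of Kudla--Rapoport, so its lifting locus is the special cycle $\mcZ(j)=\sum_s\mcZ_s$, a sum of quasi-canonical divisors $\mcZ_s\cong\Spf W_s$. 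Third, on each $\mcZ_s$ the remaining $2\times 2$ block reduces (after a coordinate change) to lifting a single quasi-endomorphism $a+\varpi b$ of a quasi-canonical lift, whose length is given by the Gross--Keating formula. The analytic side is then matched not to the total length but \emph{level by level}, via an identity $\len\mcZ_s(z)=\sigma(s)+\sigma(s-1)$ for suitable partial sums $\sigma(s)$ of the orbital integral. None of Kudla--Rapoport, quasi-canonical lifts, or Gross--Keating appears in your outline; these are the ingredients that turn the geometric side into something computable. Finally, note that for the Lie algebra version the right-hand side is $\len(\mcO_{\Delta\cap\Delta_x})$, not a derived intersection number --- the paper explains that $\langle\Delta,\Delta_x\rangle$ is not even well-behaved for $x\in\mfu$ --- so your remark about computing higher $\Tor$-terms on the Lie algebra side is off target.
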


\begin{conj}[AFL, Lie algebra version]\label{conjafllie}
Let $y\in \mfs(F_0)_{\mathrm{rs}}$ match some $x\in \mfu(F_0)\iso \mfg(F_0)$ and assume that the schematic intersection $Δ\cap Δ_x$ is zero-dimensional. Then
$$-ω(y)\partial O_y(1_{\mfs(\mcO_{F_0})}) = \log(q)\mathrm{len}(\mcO_{Δ\cap Δ_x}).$$
\end{conj}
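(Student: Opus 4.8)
The plan is to prove Conjecture~\ref{conjafllie} in the case $n=3$ (and $q\geq 5$); for general $n$ it is still open, so all that follows concerns $3\times 3$ matrices. The strategy is to compute the two sides separately, each as an explicit function of the common orbit invariants, and to work throughout in the Lie-algebra coordinates fixed above rather than on the group. First I would recall from \cite[\S 2]{zhang1} the structure of the categorical quotient $\GL_{2}(F)\bbslash M_3(F)$: writing a matrix in block form with a $2\times 2$ and a $1\times 1$ block, the invariants of $\GL_2$-conjugation are the characteristic polynomial of the $2\times 2$ block, the scalar $1\times 1$ block, and the two ``moment'' pairings formed from the off-diagonal vectors; a regular semi-simple orbit is pinned down by these, and $y\in\mfs(F_0)_{\mathrm{rs}}$ matches $x\in\mfu(F_0)\iso\mfg(F_0)$ precisely when they agree. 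Since both sides of the asserted identity depend only on the orbit --- the left side because $\omega(y)O_y(\cdot)$ is orbit-invariant, the right side by construction --- I may choose convenient representatives, and I would take $y$ and the matching $x$ in a normal form adapted to this block decomposition, which is what makes the subsequent orbital integral and deformation computation tractable.

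For the analytic side I would unwind $O_y(1_{\mfs(\mcO_{F_0})},s)$ and decompose $\GL_2(F_0)$ by the Cartan (or Iwasawa) decomposition, so that the integral collapses to a sum $\sum \eta(\cdot)\,q^{-s\cdot(\mathrm{val})}\cdot\mathbf 1[\text{integrality}]$ over a lattice of integer parameters. Organising this as a few explicit geometric series in $q^{-s}$, one evaluates at $s=0$ and differentiates; the transfer factor $\omega(y)$ absorbs the residual sign, and $-\omega(y)\,\partial O_y(1_{\mfs(\mcO_{F_0})})$ comes out as $\log q$ times a piecewise-linear expression in the valuations of the invariants. As the introduction advertises, the merit of Lie-algebra coordinates is that this bookkeeping is markedly shorter than the corresponding computation in \cite{zhang1}.

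For the geometric side I would first reinterpret the intersection. Unwinding the definitions of $\Delta$ (the graph of $\delta\colon\mcN_2\to\mcN_3$) and of $\Delta_x$, a point of $\Delta\cap\Delta_x$ is a point $P$ of $\mcN_2$ such that the quasi-isogeny $x\in\End^0_{\mcO_F}(\mbX_3)=\End^0_{\mcO_F}(\mbX_2\times\overbar{\mbY})$ lifts to a quasi-endomorphism of $(\mcX_2\times\overbar{\mcY})|_P$. Thus $\Delta\cap\Delta_x$ is, in the first instance, identified as a set with an explicit deformation locus inside $\mcN_2\iso\Spf\mcO_{\breve F}[[t]]$, namely the common zero locus of the conditions that the various quasi-homomorphisms read off from the blocks of $x$ deform over $\mcN_2$; under the hypothesis that this locus is zero-dimensional it is Artinian. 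I would then compute $\len(\mcO_{\Delta\cap\Delta_x})$ using the concrete deformation theory of $\mbX_2$ on the ring $\mcO_{\breve F}[[t]]$ --- via displays or relative Dieudonn\'e modules --- together with the arithmetic of quasi-canonical liftings (in the style of Gross), which records to exactly which power of $\pi$ a prescribed special homomorphism lifts at a given point of $\mcN_2$. The outcome should again be $\log q$ times the piecewise-linear expression found on the analytic side, up to the expected overall sign, and the proof finishes by comparing the two sides.

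The crux, I expect, is the geometric side. The two delicate points are: (i) confirming that the scheme structure on the deformation locus in $\mcN_2$ really coincides with the intersection scheme structure it inherits from the three-dimensional product $\mcN_2\times_{\Spf\mcO_{\breve F}}\mcN_3$, so that one computes the correct length --- here the polarization compatibility~\eqref{signature} and the signature requirement must be carried along faithfully rather than discarded when passing to $\mcN_2$; and (ii) checking that the numerical condition for $\Delta\cap\Delta_x$ to be zero-dimensional is exactly the condition on the invariants under which the series on the analytic side terminates (away from it, the two sides still match, but as a genuine intersection multiplicity, which the Lie-algebra formulation sidesteps by hypothesis). The restriction $q\geq 5$ I would expect to surface only in one or two explicit inequalities --- positivity of a lattice count, or existence of a representative in the chosen normal form --- and I would pin down precisely where it is needed.
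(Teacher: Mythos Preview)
Your outline is essentially the paper's approach: normalize $x$ and a matching $y$ using the $\GL_2$-invariants, compute the derived orbital integral as a coset count over $\GL_2(\mcO_{F_0})\backslash\GL_2(F_0)$ in triangular form, identify $\Delta\cap\Delta_x$ with a deformation locus in $\mcN_2$, and compare. Two points deserve sharpening.

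On the geometric side, you propose to compute via displays or Dieudonn\'e modules directly; the paper does not. The key structural input is that the off-diagonal vector $j$ of $x$ is a special homomorphism $\overbar{\mbY}\to\mbX_2$ in the sense of Kudla--Rapoport, so its deformation locus $\mcZ(j)\subset\mcN_2$ is already known to decompose as a sum $\sum_{s}\mcZ_s$ of quasi-canonical divisors \cite[Theorem~8.1]{kudlarapoport}. On each $\mcZ_s\iso\Spf W_s$ one then only needs the length of the locus where the upper-left $2\times 2$ block $z$ of $\varpi x\varpi^{-1}$ lifts, and after a change of coordinates via the Serre construction this is exactly the Gross--Keating formula (as in \cite{vollaard}). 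Without these two black boxes the computation is substantially harder; they are what makes the Lie-algebra proof short. The paper moreover does not merely match the totals: it proves the refined identity $\len\mcZ_s(z)=\sigma(s)+\sigma(s-1)$, where $\sigma(s)$ is the partial analytic sum at fixed parameter $s$, and the AFL follows by telescoping.

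Two of your anticipated difficulties are not difficulties. First, for $n=3$ the intersection $\Delta\cap\Delta_x$ is \emph{automatically} zero-dimensional, since $\Delta\iso\mcN_2\iso\Spf\mcO_{\breve F}[[t]]$ has a single geometric point; there is no numerical condition to check, and your point~(ii) is vacuous here. Second, the restriction $q\geq 5$ does not appear anywhere in the Section~3 computation of the Lie-algebra AFL; it is used only in Section~2 (the Cayley-transform reduction $n+2\leq q$) to pass between the group and Lie-algebra versions. The Lie-algebra identity for $n=3$ holds for all $q$ with $p\neq 2$.
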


We now explain the relation between these two conjectures. First note that the two families of cycles $Δ_g$ and $Δ_x$ are quite different from each other. As explained above, the cycle $Δ_g$ is really a translate of $Δ$.
In particular, it is isomorphic to $\mcN_{n-1}$ as a formal scheme and
$$\dim Δ_g = (n-1) = \frac{1}{2} \dim (\mcN_{n-1}\times _{\Spf \mcO_{\breve F}}\mcN_n).$$
Thus it makes sense to define the right hand side of Conjecture \ref{conjaflgrp} with intersection theory \eqref{intersection}.

By contrast, an element $x\in \mfu(F_0)$ is only an $\mcO_F$-linear quasi-endomorphism of $\mbX_n$ which does not necessarily preserve the polarization. The structure of the cycle $Δ_x$ is not at all clear. For example if $m\geq 1$, then $Δ_x\subset Δ_{p^mx}$ and
$$\mcN_{n-1}\times \mcN_n=\lim_{\rightarrow} Δ_{p^mx}$$
as formal schemes. It may happen that $Δ_x$ is not of middle dimension $(n-1)$. For example if $n=2$, then $Δ_x$ is very often a zero-dimensional scheme and the intersection product $\langle Δ,Δ_x \rangle$ will vanish (although the left hand side of the AFL does not). Thus one cannot define the right hand side of Conjecture \ref{conjafllie} with definition \eqref{intersection}.

We will show in Section 2 that the two AFL conjectures are equivalent in the case of non-degenerate intersection\footnote{At least if $n+2\leq q$.}. In this case, the right hand side of Conjecture \ref{conjaflgrp} equals $\log(q)\mathrm{len}(\mcO_{Δ\cap Δ_g})$, see \cite[Proposition 4.2]{RTZ}. We will then compare the schematic intersections $Δ\cap Δ_g$ and $Δ\cap Δ_x$.

In the case $n=3$, the intersection $Δ\cap Δ_x$ is always zero-dimensional. So in this case, the AFL for Lie algebras is equivalent to the original AFL.

\subsection*{Acknowledgments}
I would like to thank M. Rapoport for suggesting to think about the AFL and many helpful discussions.

\section{Reduction to the Lie algebra}
The Cayley transform is an involution on an open subset of $M_n(F)$. It interchanges $U(F_0)$ and $\mfu(F_0)$, as well as $S(F_0)$ and $\mfs(F_0)$. It was introduced in our context by Wei Zhang in \cite[Section 3]{zhang3}. We first recall some of his results. Then we use the Cayley transform to prove the equivalence of the AFL conjectures in the case of non-degenerate intersection, see Theorem \ref{thmreduct}.

\subsection{The Cayley transform}
We consider the $n\times n$-matrices $M_{n,F}:=\Res_{F/F_0}M_n$ as variety over $F_0$. For each $λ\in F^\times$ we let $D_λ$ be the closed subvariety
$$D_λ=\{x\in M_{n,F}\mid \det(x-λ)=0\}.$$
The Cayley transform is the automorphism of $M_{n,F}\setminus D_1$ defined by
$$\mfc:x\mapsto -(1+x)(1-x)^{-1}.$$
For $λ,κ\in F^\times$, we consider the modified Cayley transform
$$\begin{aligned}
{}_λ\mfc_κ :M_{n,F}\setminus D_{κ}&\overset{\iso}{\longrightarrow} M_{n,F}\setminus D_{λ}\\
x&\longmapsto  -λ(κ+x)(κ-x)^{-1}.
\end{aligned}$$
Its inverse is ${}_κ\mfc_λ$. For varying $λ$ and $κ$, these transforms cover $M_{n,F}$ in source and target.

Consider a subgroup $G\subset\Res_{F/F_0}GL_n$, with the case of interest being $\mathrm{Res}_{F/F_0}GL_{n-1}$ (embedded in the upper left). It acts on $M_{n,F}$ by conjugation. The subvarieties $D_λ$ are stable under $G$ and the ${}_λ\mfc_κ$ are equivariant for this action. In particular, $x\in M_n(F)$ is regular semi-simple if and only if ${}_λ\mfc_κ(x)$ is.

\subsection{Cayley transform on the geometric side}
\begin{lem}[Zhang]
Let $λ\in F^1$ and $κ\in \mcO_{F_0}^\times$. The Cayley transform ${}_λ\mfc_κ$ restricts to an isomorphism
$$(\mfu\setminus D_{κ})(F_0) → (U\setminus D_λ)(F_0)$$
which preserves the property ``regular semi-simple''.
\end{lem}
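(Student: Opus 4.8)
The plan is to verify the two defining conditions of the target statement separately: first that ${}_\lambda\mfc_\kappa$ maps $\mfu(F_0) \setminus D_\kappa$ into $U(F_0) \setminus D_\lambda$ bijectively, and second that it matches up regular semi-simple loci. The second point is already handled: by the $G$-equivariance of the modified Cayley transforms recalled above (with $G = \Res_{F/F_0} GL_{n-1}$), an element $x$ is regular semi-simple if and only if ${}_\lambda\mfc_\kappa(x)$ is, and since ${}_\lambda\mfc_\kappa$ also carries $D_\kappa$ isomorphically onto $D_\lambda$, restricting to the respective complements is harmless. So the heart of the matter is the first point.

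First I would make the algebraic identity transparent. The unitary group $U$ (for the hermitian form $J$) and its Lie algebra $\mfu$ are cut out inside $\Res_{F/F_0} GL_n$ (resp. $\Res_{F/F_0} M_n$) by the conditions $g^\ast g = 1$ (resp. $x^\ast + x = 0$), where $g^\ast := J^{-1}\, {}^t\overbar{g}\, J$ denotes the adjoint for the form $J$; note the formula \eqref{liealg} is exactly the condition $x^\ast = -x$. One checks directly that for $x$ with $x^\ast = -x$ and $\kappa \in F_0^\times$ (so $\overbar\kappa = \kappa$), the element $g = {}_\lambda\mfc_\kappa(x) = -\lambda(\kappa + x)(\kappa - x)^{-1}$ satisfies $g^\ast = -\overbar\lambda^{-1}(\kappa - x)^{-1}(\kappa + x)$, using $(\kappa \pm x)^\ast = \kappa \mp x$ and $(h^{-1})^\ast = (h^\ast)^{-1}$. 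Hence $g^\ast g = \overbar\lambda^{-1}\lambda\,(\kappa - x)^{-1}(\kappa + x)(\kappa + x)(\kappa - x)^{-1}$. Since $\kappa + x$ and $\kappa - x$ are each polynomials in $x$ they commute, so the right-hand side collapses to $\overbar\lambda^{-1}\lambda$, which is $1$ precisely because $\lambda \in F^1$, i.e. $\lambda\overbar\lambda = 1$. Thus $g \in U$. The same computation run backwards through the inverse transform ${}_\kappa\mfc_\lambda$ shows that if $g \in U \setminus D_\lambda$ then ${}_\kappa\mfc_\lambda(g) \in \mfu$; so on $F_0$-points we get the asserted bijection, once we know both transforms carry integral points to integral points.

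The remaining — and I expect mildly technical rather than deep — step is the \emph{integrality} of the transform, i.e. that ${}_\lambda\mfc_\kappa$ and ${}_\kappa\mfc_\lambda$ preserve $F_0$-rational points in the sense required (the statement is about $(\mfu \setminus D_\kappa)(F_0)$ and $(U \setminus D_\lambda)(F_0)$ as $F_0$-varieties, so this is really just rationality of the map, which is clear since ${}_\lambda\mfc_\kappa$ is defined over $F_0$ — the only subtlety is that $\lambda \in F^1$ and $\kappa \in \mcO_{F_0}^\times$ give a morphism defined over $F_0$, which holds because the coefficients of $-\lambda(\kappa + x)(\kappa - x)^{-1}$ lie in $F$ and the Galois-semilinear structure defining the $F_0$-forms $\mfu$ and $U$ is respected by the computation above). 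Concretely, I would phrase it as: ${}_\lambda\mfc_\kappa$ is a $G(F_0)$-equivariant isomorphism of $F_0$-varieties from $M_{n,F}\setminus D_\kappa$ to $M_{n,F}\setminus D_\lambda$, it sends the closed subvariety $\mfu \setminus D_\kappa$ isomorphically onto $U \setminus D_\lambda$ by the adjoint computation, hence it induces an isomorphism on $F_0$-points. The main thing to be careful about is keeping track of which of $\lambda, \kappa$ needs to be in $F^1$ versus $F_0^\times$ at each stage, since the computation of $g^\ast g$ uses $\overbar\kappa = \kappa$ to simplify $(\kappa \pm x)^\ast$ and uses $\lambda\overbar\lambda = 1$ at the very end; the hypotheses $\lambda \in F^1$, $\kappa \in \mcO_{F_0}^\times$ are exactly what is needed.
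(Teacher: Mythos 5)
Your approach is essentially the same as the paper's: you verify the unitarity identity directly and invoke the $G$-equivariance already recorded in the text to handle the regular semi-simple locus. The only organizational difference is that the paper first reduces to the plain Cayley transform $\mfc$ (noting that multiplication by $\lambda$ preserves $U$ and multiplication by $\kappa$ preserves $\mfu$) and then computes $\mfc(x)^{*}J\mfc(x)=J$, whereas you run the computation for ${}_\lambda\mfc_\kappa$ in one go with the adjoint formalism $g^*=J^{-1}\,{}^t\overbar{g}\,J$; both amount to the same identity.

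However, your displayed formula for the adjoint is wrong, and as written the computation does not close. From $(\kappa\pm x)^{*}=\kappa\mp x$ and the antihomomorphism property $(AB)^{*}=B^{*}A^{*}$, one gets
$$g^{*}=\bigl((\kappa-x)^{-1}\bigr)^{*}(\kappa+x)^{*}(-\lambda)^{*}=-\overbar{\lambda}\,(\kappa+x)^{-1}(\kappa-x),$$
not $-\overbar{\lambda}^{-1}(\kappa-x)^{-1}(\kappa+x)$ as you wrote. With your expression, commutativity of $\kappa\pm x$ yields $g^{*}g=\overbar{\lambda}^{-1}\lambda\,(\kappa-x)^{-2}(\kappa+x)^{2}$, which is not $1$; moreover $\overbar{\lambda}^{-1}\lambda=\lambda^{2}\neq 1$ in general for $\lambda\in F^{1}$ (you need $\lambda\overbar\lambda$, not $\overbar\lambda^{-1}\lambda$). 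With the corrected $g^{*}$, the middle factors cancel by commutativity and one finds $g^{*}g=\lambda\overbar\lambda=1$; equivalently, $g^{*}=-\lambda^{-1}(\kappa-x)(\kappa+x)^{-1}=g^{-1}$, which makes the unitarity immediate. This is a transcription slip rather than a gap in the idea, but as it stands the key equality in your proof does not hold, so it should be fixed before the argument is complete.
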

\begin{proof}
Multiplication by $λ$ and $κ$ preserve $U(F_0)$ and $\mfu(F_0)$, respectively. So it is enough to consider $\mfc$ itself. Let $x\in \mfu(F_0)$, i.e. ${}^t\overbar{x}=-JxJ^{-1}$. Then
$$\begin{aligned}
\mfc(x)^*J \mfc(x)&=(1-{}^t\overbar{x})^{-1}(1+{}^t\overbar{x})J(1+x)(1-x)^{-1}\\
&=(1-{}^t\overbar{x})^{-1}(J+{}^t\overbar{x}Jx)(1-x)^{-1}\\
&=(1-{}^t\overbar{x})^{-1}J(1-x^2)(1-x)^{-1}\\
&=(J^{-1}+xJ^{-1})^{-1}(1+x)\\
&=J.\end{aligned}$$

Verifying that ${}_κ\mfc_λ$ maps the group to its Lie algebra is similar. The claim about the property ``regular semi-simple'' was explained before.
\end{proof}
To avoid confusion, we write ${}_κ\mfc^{-1}_λ$ for the Cayley transform from the group to the Lie algebra.

In the following, we consider $x\in \mfu(F_0)_{\mathrm{rs}}$ or $g\in U(F_0)_{\mathrm{rs}}$. In order to describe $Δ\cap Δ_x\subset \mcN_{n-1}\times \mcN_n$, we project to the first factor. This identifies $Δ\cap Δ_x$ with the locus in $\mcN_{n-1}$ on which
$$\varpi x\varpi^{-1} \in \End^0(\mbX_{n-1}\times \overbar{\mbY})$$
lifts to an endomorphism of $\mcX_{n-1}\times \overbar{\mcY}$. By definition, $\mcX_{n-1}\times \overbar{\mcY}$ is an $\mcO_F$-module and $\varpi x\varpi^{-1}$ is $\mcO_F$-linear. It is then clear that $Δ\cap Δ_x$ equals the locus on which the action of the whole algebra $\mcO_F[x]\subset M_n(F)$ lifts. Similarly, $Δ\cap Δ_g$ is the locus in $\mcN_{n-1}$ to which the action of $\mcO_F[g]\subset M_n(F)$ lifts.

We say that $x$ (resp.\! $g$) is integral at a point $(X,i,λ,ρ)\in \mcN_{n-1}(\mbF)$ if $(X,i,λ,ρ)\in Δ\cap Δ_x$ (resp.\! $(X,i,λ,ρ)\in Δ\cap Δ_g$). This is equivalent to $\varpi x\varpi^{-1}$ (resp.\! $\varpi g\varpi^{-1}$) lying in $R_X:=ρ\End_{\mcO_F}(X)ρ^{-1}$. Note that $R_X$ is an order in $\End^0_{\mcO_F}(\mbX)$. So if $x$ is integral at $X$, then its characteristic polynomial has coefficients in $\mcO_F$.

\begin{lem}\label{lemcycles}
Assume that $n+2\leq q$ and let $x\in \mfu(F_0)_{\mathrm{rs}}$ and $g\in U(F_0)_{\mathrm{rs}}$. There exists $κ\in \mcO_{F_0}^\times$ such that $\mfc_κ$ is defined at $x$ and $Δ\cap Δ_x = Δ\cap Δ_{\mfc_κ(x)}$. Similarly, there exists $λ\in F^1$ such that $Δ\cap Δ_g= Δ\cap Δ_{\mfc^{-1}_λ(g)}$.
\end{lem}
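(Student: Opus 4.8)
The plan is to reduce the comparison of cycles to a statement about $\mcO_F$-orders. As observed in the discussion preceding the lemma, $\Delta\cap\Delta_x$ is exactly the locus in $\mcN_{n-1}$ on which the $\mcO_F$-subalgebra $\mcO_F[x]\subseteq M_n(F)$ lifts to an action on the universal object $\mcX_{n-1}\times\overbar{\mcY}$, and likewise $\Delta\cap\Delta_g$ for $g$. Therefore it suffices to exhibit $\kappa\in\mcO_{F_0}^\times$ (resp.\ $\lambda\in F^1$) such that $\mfc_\kappa$ is defined at $x$ (resp.\ $\mfc^{-1}_\lambda$ at $g$) and $\mcO_F[\mfc_\kappa(x)]=\mcO_F[x]$ (resp.\ $\mcO_F[\mfc^{-1}_\lambda(g)]=\mcO_F[g]$). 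Since $\mfc_\kappa(x)=-(\kappa+x)(\kappa-x)^{-1}$ is a rational function of $x$ over $F$, and $x={}_\kappa\mfc^{-1}_1(\mfc_\kappa(x))=-\kappa(1+\mfc_\kappa(x))(1-\mfc_\kappa(x))^{-1}$ is a rational function of $\mfc_\kappa(x)$, the two elements generate the same $F$-algebra $F[x]=F[\mfc_\kappa(x)]$; as $x$ is regular semisimple this is a fixed $n$-dimensional étale $F$-algebra $E$, and the point is to show that two $\mcO_F$-lattices in $E$ coincide.

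First I would dispose of the case where $\mcO_F[x]$ is not an order in $E$, i.e.\ where the characteristic polynomial $P_x$ does not lie in $\mcO_F[T]$: then $\mcO_F[x]$ is not module-finite over $\mcO_F$, is contained in no order of $\End^0_{\mcO_F}(\mbX_{n-1}\times\overbar{\mbY})$, hence $\Delta\cap\Delta_x=\emptyset$, and the equality of cycles reduces to the (easier, separate) observation that for suitable $\kappa$ the same holds on the right; I will not dwell on this and assume henceforth $P_x\in\mcO_F[T]$. Being regular semisimple, $x$ has $P_x$ as minimal polynomial, and since $P_x$ is monic Gauss's lemma gives $\mcO_F[x]\iso\mcO_F[T]/(P_x)$. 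Now choose $\kappa\in\mcO_{F_0}^\times$ whose image $\overbar\kappa$ in the residue field $\mbF$ is not a root of $\overbar{P_x}\in\mbF[T]$. There are at most $n$ forbidden residues, and since $q\ge n+2$ the group $\mbF_{F_0}^\times$ has at least $n+1$ elements, so such a $\kappa$ exists; this is the only use of the numerical hypothesis. In particular $P_x(\kappa)\in\mcO_F^\times$, so $\mfc_\kappa$ is defined at $x$.

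The heart of the argument is then the equality $\mcO_F[x]=\mcO_F[\mfc_\kappa(x)]$. For ``$\supseteq$'': in $\mcO_F[x]\iso\mcO_F[T]/(P_x)$ the element $\kappa-x$ reduces, modulo $\mfm_F$, to $\overbar\kappa-T$ in $\mbF[T]/(\overbar{P_x})$, which is invertible exactly because $\overbar\kappa$ is not a root of $\overbar{P_x}$; as $\mcO_F[x]$ is $\mfm_F$-adically complete this forces $\kappa-x\in\mcO_F[x]^\times$, hence $(\kappa-x)^{-1}\in\mcO_F[x]$ and $\mfc_\kappa(x)=-(\kappa+x)(\kappa-x)^{-1}\in\mcO_F[x]$. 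For ``$\subseteq$'': one notes that $\mfc_\kappa(x)$ is again integral (its eigenvalues $-(\kappa+\alpha)(\kappa-\alpha)^{-1}$ have the $\kappa-\alpha$ as units, by the choice of $\kappa$), so $\mcO_F[\mfc_\kappa(x)]\iso\mcO_F[T]/(P_{\mfc_\kappa(x)})$, and that $1-\mfc_\kappa(x)$ is a unit there: its eigenvalues $2\kappa(\kappa-\alpha)^{-1}$ have nonzero reduction because $\overbar\kappa\ne 0$ and $p\ne 2$, and one concludes as before. Then $x=-\kappa(1+\mfc_\kappa(x))(1-\mfc_\kappa(x))^{-1}\in\mcO_F[\mfc_\kappa(x)]$, giving equality and hence $\Delta\cap\Delta_x=\Delta\cap\Delta_{\mfc_\kappa(x)}$. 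The assertion for $g$ is the exact mirror image with the pair $(1,\kappa)$ replaced by $(\lambda,1)$: choose $\lambda\in F^1$ whose reduction avoids the roots of $\overbar{P_g}$ — here the residues of $F^1$ form a cyclic group of order $q+1$, so $q\ge n+2$ again suffices — and run the same computation with the denominators $\lambda-g$ and $1-\mfc^{-1}_\lambda(g)$.

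The step I expect to be the real obstacle is making the genericity bookkeeping honest: one must check that the residue classes one is forced to exclude (those of the eigenvalues of $x$, entering through the denominator $\kappa-x$, together with the value that must be avoided for the inverse transform) really number no more than the budget $q-1$, resp.\ $q+1$, permits, so that $n+2\le q$ is enough; once a good $\kappa$, resp.\ $\lambda$, is in hand, the equality of orders and hence of cycles is routine.
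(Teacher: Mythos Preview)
Your proposal is correct and follows essentially the same route as the paper: reduce the cycle identity to the equality of $\mcO_F$-orders $\mcO_F[x]=\mcO_F[\mfc_\kappa(x)]$, choose $\kappa$ (resp.\ $\lambda$) whose residue avoids the reductions of the eigenvalues, and then check that the relevant denominators $\kappa-x$ and $1-\mfc_\kappa(x)$ are units in the respective orders. The only cosmetic differences are that the paper invokes Cayley--Hamilton where you invoke $\mfm_F$-adic completeness to lift invertibility from the residue ring, and that the paper dismisses the case $(\Delta\cap\Delta_x)(\mbF)=\emptyset$ outright while you phrase the trivial case as ``$P_x\notin\mcO_F[T]$''; neither difference is substantive.
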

\begin{proof}
If $(Δ\cap Δ_x)(\mbF)=\emptyset$, there is nothing to prove. So let us assume that this set is non-empty. By the discussion above, it is enough to show the existence of $κ$ and $λ$ such that $\mcO_F[x]=\mcO_F[\mfc_κ(x)]$ and $\mcO_F[g]=\mcO_F[\mfc^{-1}_λ(g)]$. Let us show the existence of $κ$, the other case being similar.

As explained before, the characteristic polynomial of $x$ has integral coefficients. Let $α_1,\ldots,α_n$ be the eigenvalues of $x$, each satisfying $v(α_i)\geq 0$. Since $n< q-1$, there exists $κ\in \mcO_{F_0}^\times$ such that
$$v(α_i-κ)=0\ \ \forall i.$$
It follows that $\det(κ-x)\in \mcO_F^\times$ and hence that $κ-x \in \mcO_F[x]^\times$ by the Cayley-Hamilton theorem. In particular, $\mfc_κ(x)\in \mcO_F[x]$.

The eigenvalues of $\mfc_κ(x)$ are $-(κ+α_i)/(κ-α_i)$. The denominator in $x = {}_κ\mfc^{-1}(\mfc_κ(x))$ has then eigenvalues $1+(κ+α_i)/(κ-α_i)$ which satisfy
$$v\left(1+\frac{κ+α_i}{κ-α_i}\right)=v(2κ)=0\ \ \forall i.$$
It follows by the same arguments as above that $x\in \mcO_F[\mfc_κ(x)]$ which implies $\mcO_F[x]=\mcO_F[\mfc_κ(x)]$.

As mentioned above, the proof of the second statement is completely analogous.
\end{proof}

\subsection{Cayley transform on the analytic side}
\begin{lem}[Zhang]
Let $λ\in F^1$ and $κ\in \mcO_{F_0}^\times$. The Cayley transform ${}_λ\mfc_κ$ restricts to an isomorphism
$$(\mfs\setminus D_{κ})(F_0) → (S\setminus D_λ)(F_0)$$
which preserves the property ``regular semi-simple''.
\end{lem}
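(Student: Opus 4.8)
The statement is the exact analogue of the earlier Lemma on the geometric side, with the unitary group/Lie algebra pair $(U,\mfu)$ replaced by the symmetric-space pair $(S,\mfs)$. So the natural approach is to repeat the computation in the proof of the geometric Lemma, only with the defining equations of $S$ and $\mfs$ in place of those of $U$ and $\mfu$. Recall $S(F_0)=\{γ\in GL_n(F)\mid γ\overbar γ=1\}$ and $\mfs(F_0)=\{y\in M_n(F)\mid y+\overbar y=0\}$. As in that proof, multiplication by $κ\in\mcO_{F_0}^\times$ preserves $\mfs(F_0)$ (since $κ=\overbar κ$, so $κy+\overbar{κy}=κ(y+\overbar y)=0$), and multiplication by $λ\in F^1$ preserves $S(F_0)$ (since $(λγ)\overbar{λγ}=λ\overbar λ\,γ\overbar γ=1$). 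Hence it again suffices to treat the unmodified Cayley transform $\mfc:y\mapsto -(1+y)(1-y)^{-1}$, and to check that $\mfc$ sends $\mfs(F_0)\setminus D_1$ into $S(F_0)$, together with the corresponding statement for ${}_κ\mfc_λ^{-1}$ in the reverse direction.

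**Key step.** Let $y\in\mfs(F_0)$, i.e.\ $\overbar y=-y$. Put $γ=\mfc(y)=-(1+y)(1-y)^{-1}$. Then
\begin{align*}
γ\overbar γ&=(1+y)(1-y)^{-1}(1+\overbar y)(1-\overbar y)^{-1}\\
&=(1+y)(1-y)^{-1}(1-y)(1+y)^{-1}=1,
\end{align*}
using that $(1-y)^{-1}$ and $(1+\overbar y)=(1-y)$ commute. Thus $γ\in S(F_0)$, and $γ$ visibly lies in $GL_n(F)$ since $1-y$ is invertible by assumption and $1+y=-( (1-y)γ )$ is too. Conversely, if $γ\in S(F_0)$ with $1-γ\notin D_1$, one computes in the same way that $y:={}_κ\mfc_λ^{-1}(γ)$ — the appropriately normalized inverse Cayley transform — satisfies $y+\overbar y=0$; the only point to watch is that the normalizing constants $λ\in F^1$ and $κ\in\mcO_{F_0}^\times$ are chosen so that the formulas land in the correct spaces, exactly parallel to the distinction between ${}_κ\mfc_λ$ and ${}_κ\mfc_λ^{-1}$ made after the geometric Lemma. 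The claim that ``regular semi-simple'' is preserved is immediate from the $GL_{n-1}(F)$-equivariance of the Cayley transforms recorded in Section 2.1, since regular semi-simplicity is defined purely in terms of the conjugation action.

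**Main obstacle.** There is essentially no hard part: the proof is a two-line matrix identity together with bookkeeping of the twisting constants, and it is genuinely the same computation as for $(U,\mfu)$ — in fact slightly easier, since the conditions $γ\overbar γ=1$ and $y+\overbar y=0$ do not involve the hermitian matrix $J$. The only place requiring a moment's care is verifying that the domain and codomain $D_κ$ and $D_λ$ match up correctly under the normalizations (that ${}_λ\mfc_κ$ removes $D_κ$ from the Lie-algebra side and $D_λ$ from the group side), but this is inherited verbatim from the general discussion of ${}_λ\mfc_κ$ in Section 2.1 and the fact that $λ\in F^1$ ensures $λ$ itself is a ``group-type'' scalar while $κ\in\mcO_{F_0}^\times$ is a ``Lie-algebra-type'' scalar. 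I would therefore present the proof as: reduce to $\mfc$ by the scalar-multiplication remark, verify $γ\overbar γ=1$ by the displayed identity above, note the reverse inclusion by the symmetric computation, and invoke Section 2.1 for regular semi-simplicity.
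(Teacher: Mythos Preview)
Your proposal is correct and follows essentially the same approach as the paper: reduce to the unmodified $\mfc$ via the scalar-multiplication remark, verify $\mfc(y)\overbar{\mfc(y)}=1$ by the displayed two-line identity, handle the converse direction by the analogous computation, and invoke the $GL_{n-1}(F)$-equivariance for regular semi-simplicity. The only minor difference is that the paper writes out the converse explicitly---showing $\overbar{\mfc(γ)}=-\mfc(γ)$ via the trick $(1+γ^{-1})(1-γ^{-1})^{-1}=-(1+γ)(1-γ)^{-1}$ obtained by inserting $γγ^{-1}$---whereas you leave it as ``the symmetric computation''; but this is not a gap.
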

\begin{proof}
Multiplication by $λ$ and $κ$ preserve $S(F_0)$ and $\mfs(F_0)$, respectively. So it is enough to consider $\mfc$ itself. Let $y\in \mfs(F_0)$ i.e. $\overbar{y} = -y$. Then
$$(1+y)(1-y)^{-1}(1+\overbar{y})(1-\overbar{y})^{-1} = 1.$$

Conversely if $γ\overbar{γ}=1$, then
$$(1+\overbar{γ})(1-\overbar{γ})^{-1} = (1+γ^{-1})(1-γ^{-1})^{-1} = -(1+γ)(1-γ)^{-1}.$$
(We inserted $γγ^{-1}$ between the brackets in the last step.)

The claim about the regular semi-simplicity follows from the general remarks above.
\end{proof}
From now on, we denote the Cayley transformation from the symmetric space $S$ to the tangent space $\mfs$ by $\mfc^{-1}$. Recall that the Cayley transform is equivariant for the conjugation by $GL_{n-1}(F)$. This implies that if $γ\in (S\setminus D_κ)(F_0)_{\mathrm{rs}}$ matches $g\in U(F_0)$, then $g\notin D_κ$ and ${}_λ\mfc^{-1}_κ(γ)$ matches ${}_λ\mfc^{-1}_κ(g)$. Analogous statements are true when $γ$ is replaced with $g,y$ or $x$.

Let $f$ be a smooth compactly supported function on $S(F_0)$ and $γ\in (S\setminus D_κ)(F_0)_{\mathrm{rs}}$. It is clear that if $γ\notin D_λ$, then
$$O_γ(f,s)=O_{\!{}_κ\mfc^{-1}_λ(γ)}({}_κ\mfc^{-1}_{λ,*}f,s).$$
Namely the Cayley transform is conjugation equivariant and the twisting character $η|\cdot|^s$ in the integrand of $O_γ$ only depends on the variable $h\in GL_{n-1}(F_0)$.

\begin{prop}\label{correspond}
Assume that $n+2\leq q$. For any $γ\in S(F_0)_{\mathrm{rs}}$ with integral characteristic polynomial, there exists $λ\in F^1$ such that $γ\notin D_λ$ and such that there is an equality
$$Ω(γ)\partial O_γ(1_{S(\mcO_{F_0})})=ω(\mfc^{-1}_λ(γ))\partial O_{\mfc^{-1}_λ(γ)}(1_{\mfs(\mcO_{F_0})}).$$

For any $y\in \mfs(F_0)_{\mathrm{rs}}$ with integral characteristic polynomial, there exists $κ\in \mcO_{F_0}^\times$ such that $y\notin D_κ$ and such that there is an equality
$$ω(y)\partial O_y(1_{\mfs(\mcO_{F_0})})=Ω(\mfc_κ(y))\partial O_{\mfc_κ(y)}(1_{S(\mcO_{F_0})}).$$
\end{prop}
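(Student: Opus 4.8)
The plan is to choose the parameter $λ$ (resp.\ $κ$) so that a single numerical condition does three jobs at once: it makes the relevant Cayley transform defined at $γ$ (resp.\ $y$), it makes that transform match integral $GL_{n-1}(F_0)$-orbits with integral orbits, and it makes it preserve the transfer factor. I carry out the first assertion; the second is the same with $S$ and $\mfs$ interchanged. For the choice of $λ$: let $P\in\mcO_F[T]$ be the characteristic polynomial of $γ$ (integral by hypothesis), with roots $α_1,\dots,α_n\in\overbar{F}$. The reduction $\overbar{P}\in\mbF_{q^2}[T]$ has at most $n$ roots in $\overbar{\mbF}$, while the reductions of the norm-one elements of $F$ fill up the cyclic group $\mbF_{q^2}^1$ of order $q+1>n$ (this is where $n+2\leq q$ enters). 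So there is $λ\in F^1$ with $\overbar{P}(\overbar{λ})\neq 0$, i.e.\ with $\det(λ-γ)=\pm P(λ)\in\mcO_F^\times$; in particular $γ\notin D_λ$, the element $y:=\mfc^{-1}_λ(γ)\in\mfs(F_0)_{\mathrm{rs}}$ is defined, and both $P(λ)$ and (since $p\neq 2$ and $λ\in F^1$) $2λ$ are units.

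\emph{Orbital integrals.} By the remark preceding the proposition, $O_γ(1_{S(\mcO_{F_0})},s)=O_{\mfc^{-1}_λ(γ)}(\mfc^{-1}_{λ,*}1_{S(\mcO_{F_0})},s)$. An orbital integral depends on its test function only through the values along the relevant orbit, so it suffices to check that $\mfc^{-1}_{λ,*}1_{S(\mcO_{F_0})}$ and $1_{\mfs(\mcO_{F_0})}$ agree on the $GL_{n-1}(F_0)$-orbit of $y$. For $h\in GL_{n-1}(F_0)$ write $γ':=h^{-1}γh\in S(F_0)$ and $y':=h^{-1}yh=\mfc^{-1}_λ(γ')\in\mfs(F_0)$ (by equivariance of the Cayley transform and the lemmas of Zhang above); these have the same characteristic polynomial $P$ as $γ$, so $\det(λ-γ')=\pm P(λ)$ is a unit and, since $1-y'=2λ(λ-γ')^{-1}$, also $\det(1-y')=(2λ)^n\det(λ-γ')^{-1}$ is a unit. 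If $γ'\in S(\mcO_{F_0})$, i.e.\ $γ'\in M_n(\mcO_F)$, then $λ-γ'\in M_n(\mcO_F)$ has unit determinant, so $(λ-γ')^{-1}\in M_n(\mcO_F)$ by Cayley--Hamilton, whence $y'=-(λ+γ')(λ-γ')^{-1}\in M_n(\mcO_F)$, i.e.\ $y'\in\mfs(\mcO_{F_0})$. Conversely, if $y'\in\mfs(\mcO_{F_0})$, then $1-y'\in M_n(\mcO_F)$ has unit determinant, so $(1-y')^{-1}\in M_n(\mcO_F)$ and $γ'=-λ(1+y')(1-y')^{-1}\in M_n(\mcO_F)$, i.e.\ $γ'\in S(\mcO_{F_0})$. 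This proves the claim, so $O_γ(1_{S(\mcO_{F_0})},s)=O_{\mfc^{-1}_λ(γ)}(1_{\mfs(\mcO_{F_0})},s)$, and differentiating at $s=0$ gives $\partial O_γ(1_{S(\mcO_{F_0})})=\partial O_{\mfc^{-1}_λ(γ)}(1_{\mfs(\mcO_{F_0})})$.

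\emph{Transfer factors.} Let $B_γ$ (resp.\ $B_y$) be the matrix with columns $e,γe,\dots,γ^{n-1}e$ (resp.\ $e,ye,\dots,y^{n-1}e$), so that $Ω(γ)=η(\det B_γ)$ and $ω(y)=η(\det B_y)$; it suffices to show $v(\det B_γ)=v(\det B_y)$. Since $\det(λ-γ)\in\mcO_F^\times$, the element $λ-γ$ is a unit in $\mcO_F[γ]$ (Cayley--Hamilton), so $y=-(λ+γ)(λ-γ)^{-1}\in F[γ]$, and symmetrically $γ\in F[y]$; hence $F[γ]=F[y]$, the vector $e$ is cyclic for both $γ$ and $y$ (both $B_γ,B_y$ are invertible for regular semisimple elements), and $B_γ^{-1}B_y$ is the matrix, in the basis $1,T,\dots,T^{n-1}$, of the $F$-algebra endomorphism of $F[T]/(P)$ given by $T\mapsto R(T):=-(λ+T)(λ-T)^{-1}$. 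Over $\overbar{F}$ this endomorphism is computed by a ratio of Vandermonde matrices, and using $R(α_j)-R(α_i)=-2λ(α_j-α_i)\big((λ-α_i)(λ-α_j)\big)^{-1}$ one obtains
$$\det(B_γ^{-1}B_y)=\pm(2λ)^{\binom{n}{2}}\,P(λ)^{-(n-1)}.$$
This is an identity of rational functions in the coefficients of $P$, valid whenever $P(λ)\neq 0$, so it holds for our $γ$ even if the $α_i$ are not distinct. As $2λ$ and $P(λ)$ are units, $v(\det(B_γ^{-1}B_y))=0$, hence $v(\det B_γ)=v(\det B_y)$ and $Ω(γ)=ω(y)$. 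Combined with the previous paragraph, this gives the first identity.

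\emph{Second identity and main obstacle.} Given $y\in\mfs(F_0)_{\mathrm{rs}}$ with integral characteristic polynomial $Q$, the estimate $|\mcO_{F_0}^\times/(1+π\mcO_{F_0})|=q-1>n$ lets us pick $κ\in\mcO_{F_0}^\times$ with $Q(κ)\in\mcO_F^\times$, so $y\notin D_κ$; the Cayley--Hamilton comparison (now between $\det(κ-y')$ and $\det(1-γ')$, using $1-γ'=2κ(κ-y')^{-1}$) then matches $\mfs(\mcO_{F_0})$-orbits with $S(\mcO_{F_0})$-orbits, and the Vandermonde computation $\det(B_y^{-1}B_{\mfc_κ(y)})=\pm(2κ)^{\binom{n}{2}}Q(κ)^{-(n-1)}$ gives $ω(y)=Ω(\mfc_κ(y))$, whence $ω(y)\partial O_y(1_{\mfs(\mcO_{F_0})})=Ω(\mfc_κ(y))\partial O_{\mfc_κ(y)}(1_{S(\mcO_{F_0})})$. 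The substantive point is the transfer-factor identity: one has to recognize the base-change between the two cyclic frames $(γ^ie)_i$ and $(y^ie)_i$ as a Cayley substitution on $F[T]/(P)$ and compute its determinant. Everything else reduces to a Cayley--Hamilton unit-determinant argument once $λ$ (resp.\ $κ$) is fixed, and the convenient feature is that the one condition $P(λ)\in\mcO_F^\times$ simultaneously makes $\mfc^{-1}_λ$ defined at $γ$, matches integral orbits, and makes the above determinant a unit.
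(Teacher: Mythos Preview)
Your proof is correct and follows the paper's strategy closely: the choice of $λ$ (resp.\ $κ$) avoiding the reductions of the eigenvalues, and the Cayley--Hamilton argument matching $S(\mcO_{F_0})$-orbits with $\mfs(\mcO_{F_0})$-orbits, are exactly as in the paper.

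The one genuine difference is in the transfer-factor comparison. The paper computes $ω(\mfc^{-1}_λ(γ))$ directly by writing $\mfc^{-1}_λ(γ)=1-2λ(λ-γ)^{-1}$ and performing elementary column operations on the matrix $(\mfc^{-1}_λ(γ)^ie)_i$, reducing step by step to $η(\det(λ-γ))^{1-n}\,Ω(γ)$. You instead interpret $B_γ^{-1}B_y$ as the change-of-basis matrix for the substitution $T\mapsto R(T)$ on $F[T]/(P)$ and evaluate its determinant via a Vandermonde ratio, obtaining $\pm(2λ)^{\binom{n}{2}}P(λ)^{-(n-1)}$. Both routes land on the same key factor $\det(λ-γ)^{\pm(n-1)}$, which is a unit by the choice of $λ$. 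The paper's column manipulation is more elementary and self-contained; your Vandermonde argument is more structural and makes transparent why the exponent $n-1$ appears. One small correction: the map $T\mapsto R(T)$ is not an $F$-\emph{algebra} endomorphism of $F[T]/(P)$ (since $R$ carries the roots of $P$ to those of $Q$, not of $P$); it is the $F$-linear map determined by $T^i\mapsto R(T)^i$ on the basis, or equivalently the $F$-algebra isomorphism $F[T]/(Q)\to F[T]/(P)$. This does not affect the determinant computation.
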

\begin{proof}
\emph{Comparison of transfer factors:}\\
First note that $-(λ+γ)(λ-γ)^{-1} = 1-2λ(λ-γ)^{-1}$. We compute
$$\begin{aligned}
ω(\mfc_λ(γ))\ &= η(\det((1-2λ(λ-γ)^{-1})^i\cdot e)_{i=0,\ldots,n-1})\\
&= η(\det((-2λ(λ-γ))^{-i}\cdot e))\\
&= η(\det((λ-γ)^{-i}\cdot e))\\
&= η(\det(λ-γ))^{1-n}\cdot η(\det((λ-γ)^{n-1-i}\cdot e))\\
&= η(\det(λ-γ))^{1-n}\cdot η(\det(γ^i\cdot e))\\
&= η(\det(λ-γ))^{1-n}Ω(γ).\end{aligned}$$
Let $α_1,\ldots,α_n$ be the eigenvalues of $γ$. Since $n\leq q$, we can choose $λ\in F^1$ such that $v(λ-α_i)=0\ \forall i$. Then $\det(λ-γ)$ is a $p$-adic unit and hence $ω(\mfc_λ(γ))=Ω(γ)$.

\emph{Comparison of test functions:}\\
We check that with the above choice of $λ$,
$$\partial O_γ(1_{S(\mcO_{F_0})})=\partial O_{\mfc^{-1}_λ(γ)}(1_{\mfs(\mcO_{F_0})}).$$
More precisely, we check that
$$h^{-1}γh\in S(\mcO_{F_0})\ \ \Leftrightarrow\ \ h^{-1}\mfc^{-1}_λ(γ)h \in \mfs(\mcO_{F_0}).$$

So assume that $h^{-1}γh$ is integral. Then $λ+h^{-1}γh$ and $λ-h^{-1}γh$ are also integral. The latter one has $\det(λ-h^{-1}γh)=\prod (λ-α_i)\in \mcO_F^\times$, so lies in $GL_n(\mcO_F)$. It follows that $\mfc^{-1}_λ(γ)$ is integral.

Conversely if $h^{-1}\mfc^{-1}_λ(γ)h$ is integral, we argue in the same way. Namely
$$γ=-λ(1+\mfc^{-1}_λ(γ))(1-\mfc^{-1}_λ(γ))^{-1}$$
and $1-\mfc^{-1}_λ(γ)$ has eigenvalues $1+(λ+α_i)(λ-α_i)^{-1}$. But
$$v\left( 1+\frac{λ+α_i}{λ-α_i}\right)=v(2λ)=0\ \ \forall i.$$
So $1-h^{-1}\mfc^{-1}_λ(γ)h$ is not only integral, but lies in $GL_n(\mcO_F)$. It follows that
$$h^{-1}γh = {}_λ\mfc(h^{-1}\mfc^{-1}_λ(γ)h)$$
is integral.

The analogous statement for $y$ is proved with the same arguments. In this case, $κ\in \mcO_{F_0}^\times/(1+π\mcO_{F_0})$ which has only $q-1$ elements. So we need the stronger relation $n\leq q-2$.
\end{proof}

\begin{thm}\label{thmreduct}
Assume that $n+2\leq q$. Then the AFL for groups (Conjecture \ref{conjaflgrp}) implies the AFL for Lie algebras (Conjecture \ref{conjafllie}). Conversely, the AFL for Lie algebras implies the AFL for groups for all group elements $g\in U(F_0)_{\mathrm{rs}}$ such that $Δ\cap Δ_g$ is $0$-dimensional. 
\end{thm}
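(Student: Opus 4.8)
The plan is to play Lemma \ref{lemcycles} and Proposition \ref{correspond} off against each other, using the Cayley transform to pass between the two conjectures, and to identify the intersection number $\langle\Delta,\Delta_g\rangle$ with $\len(\mcO_{\Delta\cap\Delta_g})$ in the zero-dimensional case by means of \cite[Proposition 4.2]{RTZ}. Throughout I use that matching elements of $M_n(F)$ are conjugate, hence have the same characteristic polynomial and eigenvalues.

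The degenerate case is trivial and symmetric for the two pictures. If $\gamma\in S(F_0)_{\mathrm{rs}}$ matches $g\in U(F_0)$ (resp.\ $y\in\mfs(F_0)_{\mathrm{rs}}$ matches $x\in\mfu(F_0)$) but the common characteristic polynomial is not integral, then by the discussion preceding Lemma \ref{lemcycles} the scheme $\Delta\cap\Delta_g$ (resp.\ $\Delta\cap\Delta_x$) has no $\mbF$-point, hence is empty as $\mbF$ is algebraically closed; so $\langle\Delta,\Delta_g\rangle=0$ (resp.\ $\len(\mcO_{\Delta\cap\Delta_x})=0$). On the other side $1_{S(\mcO_{F_0})}$ (resp.\ $1_{\mfs(\mcO_{F_0})}$) is supported on matrices with integral characteristic polynomial, so the orbital integral $O_\gamma(1_{S(\mcO_{F_0})},\cdot)$ (resp.\ $O_y(1_{\mfs(\mcO_{F_0})},\cdot)$) vanishes identically, as does its derivative. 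Both sides of each AFL vanish, and from now on I assume integral characteristic polynomials.

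Assume Conjecture \ref{conjaflgrp} and let $y\in\mfs(F_0)_{\mathrm{rs}}$ match $x\in\mfu(F_0)$ with $\Delta\cap\Delta_x$ zero-dimensional. By Proposition \ref{correspond} there is $\kappa\in\mcO_{F_0}^\times$, $y\notin D_\kappa$, with
$$\omega(y)\,\partial O_y(1_{\mfs(\mcO_{F_0})})=\Omega(\mfc_\kappa(y))\,\partial O_{\mfc_\kappa(y)}(1_{S(\mcO_{F_0})}).$$
Inspecting its proof, the only constraint on $\kappa$ is that its reduction be distinct from the reductions of the eigenvalues of $y$; since $x$ and $y$ have the same eigenvalues, this is precisely the constraint Lemma \ref{lemcycles} imposes on the Cayley parameter for $x$, so the same $\kappa$ may be used and gives $\Delta\cap\Delta_x=\Delta\cap\Delta_{\mfc_\kappa(x)}$. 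By conjugation-equivariance of the Cayley transform, $\mfc_\kappa(y)\in S(F_0)_{\mathrm{rs}}$ matches $g':=\mfc_\kappa(x)\in U(F_0)\cong G(F_0)$, and $\Delta\cap\Delta_{g'}=\Delta\cap\Delta_x$ is zero-dimensional, so \cite[Proposition 4.2]{RTZ} gives $\log(q)\langle\Delta,\Delta_{g'}\rangle=\log(q)\len(\mcO_{\Delta\cap\Delta_{g'}})=\log(q)\len(\mcO_{\Delta\cap\Delta_x})$. Applying Conjecture \ref{conjaflgrp} to the matching pair $(\mfc_\kappa(y),g')$ and substituting the two previous displays yields
$$-\omega(y)\,\partial O_y(1_{\mfs(\mcO_{F_0})})=\log(q)\len(\mcO_{\Delta\cap\Delta_x}),$$
which is Conjecture \ref{conjafllie}.

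The converse is symmetric. Given $\gamma\in S(F_0)_{\mathrm{rs}}$ matching $g\in U(F_0)_{\mathrm{rs}}$ with $\Delta\cap\Delta_g$ zero-dimensional, I would use Proposition \ref{correspond} and Lemma \ref{lemcycles}, together with the coincidence of their eigenvalue conditions, to choose a single $\lambda\in F^1$ with $\gamma\notin D_\lambda$,
$$\Omega(\gamma)\,\partial O_\gamma(1_{S(\mcO_{F_0})})=\omega(\mfc^{-1}_\lambda(\gamma))\,\partial O_{\mfc^{-1}_\lambda(\gamma)}(1_{\mfs(\mcO_{F_0})})\quad\text{and}\quad\Delta\cap\Delta_g=\Delta\cap\Delta_{\mfc^{-1}_\lambda(g)}.$$
Then $y:=\mfc^{-1}_\lambda(\gamma)\in\mfs(F_0)_{\mathrm{rs}}$ matches $x:=\mfc^{-1}_\lambda(g)\in\mfu(F_0)$ and $\Delta\cap\Delta_x=\Delta\cap\Delta_g$ is zero-dimensional, so Conjecture \ref{conjafllie} applies to $(y,x)$; combining its conclusion with the orbital-integral identity above and with $\len(\mcO_{\Delta\cap\Delta_x})=\len(\mcO_{\Delta\cap\Delta_g})=\langle\Delta,\Delta_g\rangle$ (once more by \cite[Proposition 4.2]{RTZ}) recovers Conjecture \ref{conjaflgrp} for $(\gamma,g)$. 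There is no serious obstacle; the point needing care is that a \emph{single} Cayley parameter must make both the analytic comparison (Proposition \ref{correspond}) and the geometric comparison (Lemma \ref{lemcycles}) valid, which is possible because the two impose the same ``avoid at most $n$ residue classes'' condition on the common eigenvalues. This is the only reason the hypothesis $n+2\le q$ — already present in those two statements — is needed in the theorem.
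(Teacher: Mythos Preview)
Your proof is correct and follows essentially the same approach as the paper: handle the non-integral case trivially, then choose a single Cayley parameter that simultaneously validates the analytic comparison (Proposition \ref{correspond}) and the geometric comparison (Lemma \ref{lemcycles}), and invoke \cite[Proposition 4.2]{RTZ} to identify $\langle\Delta,\Delta_g\rangle$ with $\len(\mcO_{\Delta\cap\Delta_g})$ in the zero-dimensional case. The paper writes out the implication Lie $\Rightarrow$ Group explicitly while you write out Group $\Rightarrow$ Lie, but the arguments are symmetric, and your observation that the eigenvalue conditions in the two lemmas coincide is exactly what the paper means by ``By the proof of Lemma \ref{lemcycles}, the same $\lambda$ also satisfies\ldots''.
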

\begin{proof}
Let us assume the Lie algebra version of the AFL. This means that for all $x\in \mfu(F_0)_{\mathrm{rs}}$ such that $\dim Δ\cap Δ_x = 0$ with match $y\in \mfs(F_0)_{\mathrm{rs}}$, there is an equality
$$-ω(y)\dOrb_y(1_{\mfs(\mcO_{F_0})}) = \log(q)\len_{\mcO_{\breve F}} \mcO_{Δ\cap Δ_x}.$$

Now let $g\in U(F_0)_{\mathrm{rs}}$ be such that $\dim(Δ\cap Δ_g)=0$. Let $γ\in S(F_0)_{\mathrm{rs}}$ be a match for $g$. We have to show the equality
$$-Ω(γ)\partial O_γ(1_{S(\mcO_{F_0})}) = \log(q)\langle Δ,Δ_g\rangle.$$

If $γ$ is not $GL_n(F)$-conjugate to an integral matrix, both sides are $0$. So let us assume that $γ$ is conjugate to an integral matrix. In particular, its characteristic polynomial has integral coefficients. We choose $λ\in F^1$ as in Proposition \ref{correspond}. Then
$$Ω(γ)\partial O_γ(1_{S(\mcO_{F_0})}) = ω(\mfc^{-1}_λ(γ))\partial O_{\mfc^{-1}_λ(γ)}(1_{\mfs(\mcO_{F_0})}).$$

Now $g\notin D_λ$ and the element $\mfc^{-1}_λ(g)$ matches $\mfc^{-1}_λ(γ)$. By the proof of Lemma \ref{lemcycles}, the same $λ$ also satisfies
$$Δ\cap Δ_g=Δ\cap Δ_{\mfc^{-1}_λ(g)}.$$
By \cite[Proposition 4.2]{RTZ}, all higher $Tor$-terms in the intersection product $\langle Δ,Δ_g\rangle$ vanish. So
$$\langle Δ,Δ_g\rangle = \len Δ\cap Δ_g=\len Δ\cap Δ_{\mfc^{-1}_λ(g)}.$$
Hence the two sides of the AFL for groups equal the two sides of the AFL for Lie algebras. This proves the statement.

Conversely, we can apply the same arguments to deduce the Lie algebra version from the group version.
\end{proof}

\section{The Lie algebra version for $n=3$}
We keep all previous notation, but specialize to $n=3$. We fix a regular semi-simple element $x\in \mfu(F_0)_{\mathrm{rs}}$ matching some $y\in \mfs(F_0)_{\mathrm{rs}}$ and want to verify the AFL (Conjecture \ref{conjafllie}) for it. As mentioned in the introduction,
$$Δ\iso\mcN_2\iso \Spf \mcO_{\breve F}[[t]]$$
is geometrically a point. So the intersection $Δ\cap Δ_x$ (resp.\! $Δ\cap Δ_g$) is automatically zero-dimensional. It follows that a verification of the AFL for Lie algebras will also prove the AFL for groups in this case, see Theorem \ref{thmreduct}.

\subsection{Choosing coordinates}
For a vector $j={}^t(j_1,j_2)\in F^2$, we write $v(j)$ for the valuation $v(-πj_1\overbar{j_1} + j_2\overbar{j_2})$ of its norm with respect to the hermitian form $J$.

By \eqref{liealg}, the element $x$ has the form
$$x=\left(\begin{matrix}
a_1 & b & j_1 \\
π\overbar{b} & a_2 & j_2\\
π\overbar{j_1} & -\overbar{j_2} & d
\end{matrix}\right)$$
with the property that $a_1,a_2,d\in τF_0$. We also set $j:=j(x):={}^t(j_1,j_2)$. Since $x$ is regular semi-simple, $j(x)\neq 0$.

Let $\tilde U(F_0)$ denote the unitary group associated to the form defined by $\diag(-π,1)$. It acts on $\mfu(F_0)$ by conjugation without changing the matching relation or the cycle $Δ_x$. In particular, we can assume that ${}^tj=(0,π^m)$ if $v(j)=2m$ is even or ${}^tj=(π^m,0)$ if $v(j)=2m+1$ is odd. Note that this forces $b\neq 0$, since otherwise $x$ would have a non-trivial stabilizer in $GL_2(F)$.

We choose to work with the match $y\in \mfs(F_0)_{\mathrm{rs}}$ for $x$ given by
\begin{equation}\label{matchA}
y= \begin{cases} \left(\begin{matrix}
a_1 & τπN(b) & τπ^m \\
τ^{-1} & a_2 & \\
τ^{-1}ππ^m &  & d
\end{matrix}\right) & \tif v(j) = 2m+1 \text{ is odd}\\

\left(\begin{matrix}
a_1 & -τ^{-1} &  \\
-τπN(b) & a_2 & -τπ^m\\
 & τ^{-1}π^m & d
\end{matrix}\right) & \tif v(j) = 2m \text{ is even.}\end{cases}
\end{equation}
Here we conjugated by $\diag(τ,π^{-1}\overbar{b}^{-1},1)$ in the odd case and by $\diag(b^{-1},-τ,1)$ in the even case. This specific choice is motivated by the fact that both cases can be treated with the same equations later.

The endomorphism $\varpi x\varpi^{-1}$ of $\mbX_3$ is given by
$$\varpi x\varpi^{-1}=\begin{cases}\left(\begin{matrix}
-a_1 & \varpi b & \varpi π^m \\
\varpi b & a_2 & \\
\varpi π^m & & d
\end{matrix}\right) & \tif v(j)=2m+1\\

\left(\begin{matrix}
-a_1 & \varpi b &  \\
\varpi b & a_2 & π^m\\
& -π^m & d
\end{matrix}\right) & \tif v(j)=2m. \end{cases}$$

Recall that $\mcX_2$ denotes the universal object over $\mcN_2\iso Δ$. Also recall that $Δ\cap Δ_x$ can be identified with the locus in $\mcN_2$ to which $\varpi x\varpi^{-1}$ lifts as endomorphism of $\mcX_2\times \overbar{\mcY}$.

The unique geometric point of $\mcN_2$ corresponds to the group $\mbX_2=\mbY\times \overbar{\mbY}$ over $\mbF$. So $Δ\cap Δ_x\neq \emptyset$ if and only if $\varpi x\varpi^{-1}\in M_3(\End(\mbY))=M_3(\mcO_D)$. This is equivalent to $x\in M_3(\mcO_F)$.

\begin{lem}
The element $x$ is integral if and only if it matches an integral element in $\mfs(F_0)_{\mathrm{rs}}$. In particular, the AFL for $n=3$ holds if $x$ is not integral.
\end{lem}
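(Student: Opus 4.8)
The statement has two parts: first, that $x$ is integral (i.e.\ $x \in M_3(\mcO_F)$) if and only if it matches an integral element of $\mfs(F_0)_{\mathrm{rs}}$; second, the corollary that the AFL holds for non-integral $x$. The plan is to handle the second part first, since it is immediate from the first: if $x$ is not integral, then by the equivalence it matches no integral element of $\mfs(F_0)$, so in particular the orbit of $y$ under $GL_2(F_0)$ never meets $\mfs(\mcO_{F_0})$ and $\partial O_y(1_{\mfs(\mcO_{F_0})}) = 0$; on the geometric side, non-integrality of $x$ means $\varpi x \varpi^{-1} \notin M_3(\mcO_D)$, so $\varpi x\varpi^{-1}$ does not even lift at the unique geometric point of $\mcN_2$, hence $Δ \cap Δ_x = \emptyset$ and $\mathrm{len}(\mcO_{Δ\cap Δ_x}) = 0$. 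Both sides of Conjecture~\ref{conjafllie} vanish.

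For the main equivalence, the key point is that ``integral'' is a property of the $GL_{n-1}(F)$-orbit through $M_n(\mcO_F)$, and matching elements are $GL_{n-1}(F)$-conjugate inside $M_n(F)$. So I would first reduce to an invariant-theoretic statement: an element of $\mfu(F_0)_{\mathrm{rs}}$ or $\mfs(F_0)_{\mathrm{rs}}$ has a $GL_{n-1}(F)$-conjugate in $M_n(\mcO_F)$ if and only if its image in the GIT quotient $GL_{n-1}(F)\bbslash M_n(F)$ lands in (the image of) $M_n(\mcO_F)$ — more precisely, if and only if the finitely many invariants generating the coordinate ring of the quotient are all in $\mcO_F$. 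Since matching $x$ and $y$ have, by definition, the same image in the quotient, the claim follows once one knows: (a) for regular semi-simple elements, integrality of the invariants forces the existence of an integral conjugate, and this holds in both the Lie algebra $\mfu$ and the symmetric space $\mfs$; (b) conversely, having an integral conjugate clearly forces the invariants to be integral (the characteristic polynomial coefficients and the other generating invariants are polynomials in the entries).

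The nontrivial direction is (a), and I expect the cleanest route is explicit, using the coordinates already set up in Section~3 for $n=3$: one has the explicit matrix form of $x$ with entry $j(x) \neq 0$, the normalization of $j$ to $(0,\pi^m)$ or $(\pi^m,0)$ by $\tilde U(F_0)$, and the explicit match $y$ in~\eqref{matchA}. Comparing entries of $\varpi x \varpi^{-1}$ with those of $y$, one sees directly that integrality of the data $a_1, a_2, d, b, \pi^m$ (equivalently $v(j)\ge 0$ and $\mathrm{charpol}$ integral) is simultaneously equivalent to $x \in M_3(\mcO_F)$ and to $y \in \mfs(\mcO_{F_0})$ up to the stated conjugations — one must be slightly careful that the conjugating matrices $\diag(\tau, \pi^{-1}\overbar b^{-1}, 1)$ and $\diag(b^{-1}, -\tau, 1)$ are in $GL_2(F)$ but not $GL_2(\mcO_F)$, so ``$y$ integral'' must be read as ``$y$ is $GL_2(F)$-conjugate to an integral element,'' which is exactly the statement of the lemma.

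The main obstacle is establishing (a) cleanly in the direction ``integral invariants $\Rightarrow$ integral conjugate'' for the symmetric space $\mfs$: unlike for $\mfu$, where one can invoke that $\mcO_F[x]$ acting on a self-dual lattice produces an integral model, for $\mfs(F_0) = \tau \cdot \mathfrak{gl}_n(F_0)$ one needs that a $GL_{n-1}(F)$-orbit with integral invariants contains a point of $\mfs(\mcO_{F_0})$, which is essentially a statement about $GL_{n-1}$-orbits of pairs (matrix, cyclic vector). For $n=3$ this can be checked by hand from the explicit forms above; in general it is where a reference to \cite{zhang2} on the structure of the quotient $GL_{n-1}(F)\bbslash M_n(F)$ would be needed. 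Since the lemma is only used here for $n=3$, carrying out the explicit entry comparison suffices, and I would present the argument in those terms.
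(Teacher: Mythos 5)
Your overall strategy is the same as the paper's: reduce the corollary to the main equivalence (both sides of the AFL vanish if $x$ is not integral, since the orbit of $y$ misses $\mfs(\mcO_{F_0})$ and $\varpi x\varpi^{-1}$ doesn't lift at the unique geometric point), and prove the equivalence by comparing $x$ with the explicit match $y$ of \eqref{matchA} via conjugation-invariants. But your proposal stops at the plan. The actual content of the lemma is the hard direction -- that if \emph{some} $GL_2(F_0)$-conjugate $hyh^{-1}$ of $y$ is integral, then the normalized $x$ is integral -- and you only assert that this ``can be checked by hand'' without doing it. The paper's argument is short but not automatic: set $d=0$ (it is invariant), use the $GL_2(\mcO_{F_0})$-freedom to put $h$ in triangular form, read off $m\geq 0$ from the corner entries of $hyh^{-1}$, then deduce integrality of $a_2$, $a_1$, and $b$ by matching $\det$, $\tr$, and the determinant of the $2\times2$ block with the (integral) entries of $hyh^{-1}$. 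That chain of observations is what makes the lemma true and is missing from your write-up.

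Two further points are off. First, the caveat about the conjugating matrices $\diag(\tau,\pi^{-1}\overbar{b}^{-1},1)$, $\diag(b^{-1},-\tau,1)$ being in $GL_2(F)$ but not $GL_2(\mcO_F)$ is misplaced: the easy direction is that the \emph{resulting} $y$ in \eqref{matchA} is literally in $\mfs(\mcO_{F_0})$ when $x\in M_3(\mcO_F)$ (visible by inspection of the displayed matrix), so no ``conjugate-to-integral'' reading is required there. Second, the detour through a general GIT statement about $GL_{n-1}(F)\bbslash M_n(F)$ and a self-dual-lattice argument for $\mfu$ is a red herring for this lemma; the paper needs only the concrete invariant comparison in the fixed coordinates, and your plan in fact falls back to exactly that. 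One genuine subtlety you also leave implicit (as does the paper, parenthetically) is why every integral match of $x$ is of the form $hyh^{-1}$ with $h\in GL_2(F_0)$ rather than $GL_2(F)$ -- a statement about $GL_2(F_0)$-orbits on $\mfs(F_0)_{\mathrm{rs}}$ within a fixed $GL_2(F)$-orbit -- which is where the triangular-$h$ reduction gets its traction.
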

\begin{proof}
If $x$ is integral, then our $y$ from \eqref{matchA} is also integral. Conversely, let us assume that there is $h\in GL_2(F_0)$ such that $hyh^{-1}$ is integral. (Every other match is of this form.) We show that $x$ is also integral.

First note that the bottom right $d$ is unchanged under conjugation by $GL_2(F)$. So we can assume $d=0$. From now on, we restrict to the case of odd $v(j)$.

We can multiply $h$ on the left by $GL_2(\mcO_{F_0})$ without changing the integrality. So let us assume that $h$ is upper triangular,
$$h=\left(\begin{matrix} µ & \star\\ & ν\end{matrix}\right),$$
$$hyh^{-1}= \left(\begin{matrix}
y_1 & y_2 & µτπ^m \\
y_3 & y_4 & \\
µ^{-1}τ^{-1}ππ^m &  & 
\end{matrix}\right).$$
Then both $µτπ^m$ and $µ^{-1}τ^{-1}ππ^m$ are integral, so $m\geq 0$. Now $\det(x)=-π^{2m+1}a_2 = \det(hyh^{-1}) = -π^{2m+1}y_4$ which implies $a_2=y_4$. Arguing with the trace yields $y_1=a_1$. Since also the determinant of the $2\!\times\! 2$-block is preserved, $πN(b)=y_2y_3$. So all of $a_1,a_2,b$ are integral.

The case when $v(j)$ is even is done similarly with lower triangular $h$.
\end{proof}

From now on we restrict to the case of integral $x$. The diagonal action of $τ\mcO_{F_0}\times τ\mcO_{F_0}$ on $\mbX_2\times\overbar{\mbY}$ lifts to $\mcX_2\times \overbar{\mcY}$, so we can subtract it from $x$ without changing $Δ\cap Δ_x$. This also does not change the derived orbital integral $ω(y)\partial O_y(1_{\mfs(\mcO_{F_0})})$. More precisely, let $µ\in τ\mcO_{F_0}\times τ\mcO_{F_0}$. Then $x$ is regular semi-simple if and only if $x-µ$ is, and $x-µ$ matches $y-µ$ if and only if $x$ matches $y$. Furthermore $O_y(1_{\mfs(\mcO_{F_0})},s)=O_{y-µ}(1_{\mfs(\mcO_{F_0})},s)$ for all $s$ and $ω(y)=ω(y-µ)$. In particular, it suffices to verify the AFL for $x-µ$.

So from now on we assume that $d=0$ and that the matrix trace $\mathrm{tr}(x)=0$ vanishes.

\subsection{Geometric Side}
We now want to compute $\len(Δ\cap Δ_x)$. In the previous subsection we reduced to a quasi-endomorphism $\varpi x\varpi^{-1}$ of the form
$$\varpi x \varpi^{-1}=\begin{cases}\left(\begin{matrix}
a & \varpi b & \varpi π^m \\
\varpi b & a & \\
\varpi π^m  & & 
\end{matrix}\right) & \mathrm{if}\ v(j)=2m+1\ \text{is odd}\\

\left(\begin{matrix}
a & \varpi b &  \\
\varpi b & a & π^m\\
& -π^m & 
\end{matrix}\right) & \mathrm{if}\ v(j)=2m\ \text{is even}.\end{cases}$$

This is an element of the matrix ring
$$\left(\begin{matrix}
\End^0(\mbX_2) & \Hom^0(\overbar{\mbY}, \mbX_2)\\
\Hom^0(\mbX_2,\overbar{\mbY}) & \End^0(\overbar{\mbY})
\end{matrix}\right).
$$
We can compute the locus where it lifts to $\mcX_2\times \overbar{\mcY}$ entry-wise. The lower right entry of $\varpi x\varpi^{-1}$ is $0$, so lifts to all of $\mcN_2$. The upper right $\diag(\varpi,1) j$ and lower left ${}^t\overbar{j}\!\diag(\varpi^{-1},1)$ are dual to each other under the Rosati involution (up to sign). So they lift to the same locus.

The crucial point in the computation happens now. The vector $j$ is a homomorphism $\overbar{\mbY}→\mbX_2$, i.e.\! a special homomorphism in the sense of Kudla and Rapoport, see \cite{kudlarapoport}. We denote by $\mcZ(j)\subset \mcN_2$ the closed formal subscheme to which it lifts, a so-called special cycle. By \cite[Theorem 8.1]{kudlarapoport}, it is a divisor and there is an equality,
$$\mcZ(j)=\sum^{v(j)}_{s=0,\ s\equiv v(j)\mod 2} \mcZ_s$$
where $\mcZ_s\subset \mcN_2$ is the quasi-canonical divisor of level $s$. It is isomorphic to $\Spf W_s$, where $W_s$ is the ring of integers of the ring class field $F_s/\breve{F}$ associated to the order $\mcO_s:=\mcO_{F_0}+π^s\mcO_F$.

The inclusion $\mcZ(j)→\mcN_2$ is induced from the Serre construction applied to a quasi-canonical lift $\mcY_s/W_s$. More precisely, $\mcO_F\tensor_{\mcO_{F_0}} \mcY_s$ is a unitary $p$-divisible $\mcO_F$-module with framing
\begin{equation}\label{framing}
ψ:\mcO_F\tensor_{\mcO_{F_0}} \mbY\iso \mbY\times\overbar{\mbY},\ µ\tensor x \mapsto (µx,µx).
\end{equation}
We refer to \cite{wewers} for more about quasi-canonical lifts.

Let us denote the upper left entry of $\varpi x\varpi^{-1}$ by
$$z=\left(\begin{matrix} a & \varpi b \\ \varpi b & a\end{matrix}\right).$$
Let us also write $\mcZ_s(z)$ for the locus on the quasi-canonical divisor $\mcZ_s$ where $z$ is an endomorphism. The decomposition above yields
\begin{equation}\label{decomp}
\len(Δ\cap Δ_x) = \sum_{s=0,\ s\equiv v(j)}^{v(j)} \len \mcZ_s(z).
\end{equation}
We now explain how to compute each of the lengths $\len \mcZ_s(z)$.

First note that $z$ is a quasi-endomorphism of $\mcY_s$ in the coordinates given by $ψ$. These coordinates do not lift to $\mcO_F\tensor\mcY_s$. Instead we use the coordinates given by choosing the $\mcO_{F_0}$-basis $1,τ$ of $\mcO_F$,
$$\mcO_F\tensor\mcY_s = \mcY_s \times τ\mcY_s$$
These coordinates induce a similar decomposition $φ:\mcO_F\tensor \mbY = \mbY\times τ\mbY$ on the special fiber. (The bars don't play a role here.) The framing \eqref{framing} is then given by the matrix
\begin{equation}
\mbY\times τ\mbY\ \overset{\left(\begin{smallmatrix} 1 & τ\\ 1 & -τ\end{smallmatrix}\right)}{\longrightarrow}\ \mbY\times\overbar{\mbY}.
\end{equation}

We rewrite $z$ as $z'$ in these new coordinates (note that $\varpi τ = -τ\varpi$)
$$
\begin{aligned}z' =&  \left(\begin{matrix}
1 & τ\\
1 & -τ\end{matrix}\right)^{-1} z \left(\begin{matrix}
1 & τ\\
1 & -τ\end{matrix}\right)\\
= & \left(\begin{matrix}
a+\varpi b & \\
 & a+\varpi b
\end{matrix}\right).\end{aligned}$$

To express the relevant lengths, we define
$$a(k)=1+\frac{(q^k-1)(q+1)}{q-1} = 1+q+\ldots+q^k+q^{k-1}+\ldots+q+1.$$

\begin{thm}[Gross-Keating, see \cite{vollaard}]\label{thmgk}
Let $\mcY_s/\Spf W_s$ be a quasi-canonical lift of level $s$. Let $l\geq 0$ and
$$f\in (\mcO_s+\varpi^l \End(\mbY))\setminus (\mcO_s+\varpi^{l+1} \End(\mbY)).$$
Then the length $n$ of the locus on $\Spf W_s$ to which $f$ lifts is equal to
$$n= \begin{cases} a(\frac{l}{2}) & \tif l\leq 2s \text{ is even}\\
                   a(\frac{l-1}{2})+q^{(l-1)/2} & \tif l\leq 2s \text{ is odd}\\
                   a(s-1)+q^{s-1}+\left(\frac{l+1}{2}-s\right)e_s & \tif l\geq 2s-1.\end{cases}$$
Here $e_s$ is the ramification index of $W_s/W$. If $s\geq 1$, then $e_s=q^s+q^{s-1}$.
\end{thm}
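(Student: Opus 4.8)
The plan is to reduce the computation of the lift-locus to the classical Gross--Keating theory of deformations of formal $\mcO_{F_0}$-modules of height $2$, applied to the quasi-canonical lift $\mcY_s$ and the quasi-endomorphism $f$. First I would recall that a quasi-endomorphism $f \in \End^0_{\mcO_{F_0}}(\mbY)$ lifts over an Artinian $W_s$-algebra exactly to the length prescribed by the Gross--Keating ``horizontal'' deformation invariant of the pair $(\mcY_s, f)$; this is the content of the cited reference \cite{vollaard}. The key observation is that the order $\mcO_s = \mcO_{F_0} + π^s \mcO_F$ already acts on $\mcY_s$ by construction (that is the defining property of a quasi-canonical lift of level $s$), so only the ``excess'' of $f$ beyond $\mcO_s$ matters. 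The hypothesis places $f$ in $\mcO_s + \varpi^l\End(\mbY)$ but not in $\mcO_s + \varpi^{l+1}\End(\mbY)$, so after subtracting an element of $\mcO_s$ — which lifts over all of $\Spf W_s$ and hence does not change the locus — we may assume $f \in \varpi^l\End(\mbY) \setminus \varpi^{l+1}\End(\mbY)$, i.e. $f = \varpi^l u$ with $u \in \End(\mbY)^\times = \mcO_D^\times$.

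Next I would compute the deformation length of such $f = \varpi^l u$ on $\mcY_s$ directly. Write $\mcO_D = \mcO_F \oplus \varpi\mcO_F$; then $\varpi^l u$ either lies in $\mcO_F^\times \cdot \varpi^l$ (so $f$ generates, together with $\mcO_{F_0}$, an order of the form $\mcO_{F_0} + \varpi^l\mcO_F$-type inside $D$) or is a unit times $\varpi^l$ with a nontrivial $\varpi$-component — but over $F_0$ these give the same conductor exponent. The relevant invariant is thus $\min(l, 2s) $ versus $l$, and the three cases of the statement correspond exactly to: $f$ lifting ``within the range'' where $\mcO_s$ is the dominant constraint ($l \le 2s$), split by parity, versus $f$ lifting ``past'' the level $s$ of the quasi-canonical lift ($l \ge 2s-1$). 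In each regime the length is read off from the structure of $\Spf W_s$ and its special cycles: for $l \le 2s$ one is effectively on the canonical lift ($W$ itself, after base change), and the function $a(k)$ counts the length of the locus where $\varpi^{2k}$ (resp. the odd-exponent variant) lifts, a computation that goes back to Gross's work on quasi-canonical lifts and is recorded in \cite[§4]{wewers} and \cite{vollaard}; for $l \ge 2s-1$ the locus is a union of $\mcZ_s$-type components whose length grows linearly in $l$ with slope the ramification index $e_s = q^s + q^{s-1}$.

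The main obstacle will be the bookkeeping at the boundary $l \approx 2s$, where one must check that the three formulas agree: substituting $l = 2s$ into the first (even) case should match $l = 2s$ in the third case, and similarly $l = 2s - 1$ into the second and third. I would verify these compatibilities using the identity $a(s) = a(s-1) + q^s + q^{s-1} - (\text{lower order})$ implicit in the closed form $a(k) = 1 + (q^k-1)(q+1)/(q-1)$, so that the discrete derivative $a(s) - a(s-1) = q^s + q^{s-1} = e_s$ matches the linear slope in the third case. Once this gluing is confirmed, the theorem follows by combining: (i) the reduction $f \equiv \varpi^l u \pmod{\mcO_s}$; (ii) the identification of the lift-locus with a Gross--Keating deformation length; (iii) the explicit evaluation of that length on $\mcY_s$ in each of the three regimes via \cite{vollaard} and \cite{wewers}. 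Since this is an input from the literature stated for our use, I would present the argument as an assembly of these cited facts rather than reprove Gross--Keating theory from scratch.
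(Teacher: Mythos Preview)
The paper does not prove this theorem. It is stated with attribution to Gross--Keating and a citation to \cite{vollaard}, and then used as a black-box input for the length computation in Section~3.2 (specifically, to produce formula~\eqref{eqGK}). There is therefore no ``paper's own proof'' to compare your sketch against. Your closing sentence already anticipates this: the result is an import from the literature, and the paper treats it as such.

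That said, a brief remark on your sketch itself. The reduction step---subtracting an element of $\mcO_s$ to assume $f=\varpi^l u$ with $u\in\mcO_D^\times$---is fine, since $\mcO_s=\End_{\mcO_{F_0}}(\mcY_s)$ lifts over all of $\Spf W_s$. But after that you implicitly assume the length depends only on $l$ (and $s$), not on the particular unit $u$; this is exactly the nontrivial content of the Gross--Keating/Vollaard computation, and your outline does not supply an independent reason for it. Your boundary check $a(s)-a(s-1)=q^s+q^{s-1}=e_s$ is correct and does confirm that the three displayed formulas glue consistently at $l=2s-1$ and $l=2s$, but consistency of the case split is not a proof of the values themselves. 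So if you were actually asked to prove the theorem rather than cite it, the argument as written would still defer the substance to \cite{vollaard}.
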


To compute $\len\mcZ_s(z)$, we apply this theorem with $f=a+\varpi b$. We define $l:=v_{\varpi}(\varpi b)=2v(b)+1$ and $k:=v_{\varpi}(a)=2v(a)$. Then we get
\begin{equation}\label{eqGK}
\len_{\mcO_{\breve F}} \mcZ_s(z)= \begin{cases}
a\big ( \frac{l-1}{2}\big) + q^{(l-1)/2} & \tif (l<k\ \mathrm{or}\ 2s\leq k)\ \mathrm{and}\ l< 2s\\
a(s-1)+q^{s-1}+\big(\frac{l+1}{2} - s\big)e_s & \tif (l<k\ \mathrm{or}\ 2s\leq k)\ \mathrm{and}\ l>2s\\
a(k/2) & \tif k<l \tand k < 2s.\end{cases}
\end{equation}

\subsection{Analytic Side}
Recall that $y$ denotes the matching candidate for $x$ from \eqref{matchA},
$$
y= \begin{cases} \left(\begin{matrix}
-a & τπN(b) & τπ^m \\
τ^{-1} & a & \\
τ^{-1}ππ^m &  & 
\end{matrix}\right) & \tif v(j) = 2m+1 \text{ is odd}\\

\left(\begin{matrix}
-a & -τ^{-1} &  \\
-τπN(b) & a & -τπ^m\\
 & τ^{-1}π^m & 
\end{matrix}\right) & \tif v(j) = 2m \text{ is even.}\end{cases}
$$
Note that $ω(y)=+1$ in both cases. We denote by $\tilde z$ the upper left $2\!\times \! 2$-block.

We now want to compute
$$-ω(y)\dOrb_y(1_{\mfs(\mcO_{F_0})})=\log(q)\int_{GL_2(F_0)} (-1)^{v(\det h)+1}v(\det h)1_{\mfs(\mcO_{F_0})}(hyh^{-1})dh.$$
(The minus sign comes from the coordinate substitution $h\mapsto h^{-1}$.) Note that $\mfs(\mcO_{F_0})$ is stable under the action of $\GL_2(\mcO_{F_0})$. So this derived orbital integral counts cosets $[h]\in GL_2(\mcO_{F_0})\backslash GL_2(F_0)$ with the property that $hyh^{-1}\in M_3(\mcO_F)$ with certain weights $-η(\det h)v(\det h)\log(q)$. We compute the integral by counting these cosets.

Recall the definitions $l=v(πN(b))$ and $k=v(a^2)$. Let us represent any class $[h]$ in triangular form
$$h=\begin{cases}π^{m+1}π^{-s}
\left(\begin{matrix}
1 & \star\\
& π^t\end{matrix}\right) & \tif v(j) \text{ is odd}\\
π^mπ^{-s}\left(\begin{matrix}
π^t & \\ \star& 1
\end{matrix}\right). & \tif v(j) \text{ is even}.\end{cases}$$
Here $s$ and $t$ are uniquely determined while $\star$ is unique in $F_0/π^t\mcO_{F_0}$. We now determine necessary and sufficient conditions on $s,t$ and $\star$ for $hyh^{-1}$ to be integral.

\emph{$hj$ integral:}
This is equivalent to $s\leq v(j)$.

\emph{${}^tjh^{-1}$ integral:}
This is equivalent to $0 \leq s$ and $\star\in π^{t-s}\mcO_{F_0}$.

\emph{$h\tilde zh^{-1}$ integral:}
We compute in the odd case:
$$\begin{aligned}
h\tilde zh^{-1} = & \left(\begin{matrix}
1 & \star\\ & π^t \end{matrix}\right)
\left(\begin{matrix}
-a & τπN(b)\\τ^{-1} & a
\end{matrix}\right)
\left(\begin{matrix}
1 & -\star π^{-t} \\ & π^{-t}
\end{matrix}\right)\\
= & \left(\begin{matrix}
-a+\starτ^{-1} & π^{-t}(-\star^2τ^{-1} + 2a\star+τπN(b))\\
τ^{-1}π^t & a-\star τ^{-1}
\end{matrix}\right).
\end{aligned}$$

Since $a\in \mcO_F$, the upper left and lower right entries are integral if and only if
$$\star\in \mcO_{F_0}.$$
The lower left is integral if and only if
$$0\leq t.$$
The upper right entry is integral if and only if
$$(\star-τa)^2\in τ^2a^2+τ^2πN(b)+π^{t}\mcO_{F_0}.$$
The reader may check that with our choice of coordinates, the even case leads to exactly the same conditions.

For fixed $s,t$, we denote by $α(s,t)$ the number of classes $\star$ satisfying all these conditions. Taken together, these conditions are
$$0\leq s\leq v(j),\ 0\leq t,\ \star\in π^{\max\{0,t-s\}}\mcO_{F_0}/π^t\mcO_{F_0}\ \tand$$
$$(\star-τa)^2\in τ^2a^2+τ^2πN(b)+π^{t}\mcO_{F_0}.$$
If there are no solutions for $\star$ or if $s,t$ are out of their ranges, we set $α(s,t)=0$. Then the derived orbital integral is
\begin{equation}\label{derivedInt}
-ω(y)\partial O_y(1_{\mfs(\mcO_{F_0})}) = \log(q) \sum_{s,t} (-1)^{t+1}(2m-2s+t+ε)α(s,t)
\end{equation}
with $ε=2$ in the odd and $ε=0$ in the even case. We also define the partial sum
\begin{equation}\label{sigma}
σ(s):= \sum_t (-1)^{t+1}(2m-2s+t+ε)α(s,t).
\end{equation}

\textbf{Case A: $l<k$}\\
(We make the basic assumptions that $0\leq s\leq v(j)$ and $0\leq t$.)\\
Then $v(τ^2a^2+τ^2πN(b))=l$ is odd. In particular it is not a square. We can only get a nontrivial solution count if $πN(b)\in π^{t}\mcO_{F_0}$, i.e.\! if $t\leq l$. In this case,
we simply ask for
$$\star^2\in π^{t}\mcO_{F_0}.$$
Given the restrictions on $\star$, this yields
$$α(s,t)=q^{\min\{t-\lceil t/2\rceil,t-(t-s)\}} = q^{\min\{\lfloor t/2\rfloor,s\}}.$$

Now we compute $σ(s)$ for $0\leq s\leq v(j)$. Note that $l$ is odd, so the alternating sum below has an even number of summands.
\begin{equation}\label{aplusb}
\begin{aligned}σ(s):=&\sum_{t=0}^l (-1)^{t+1}(2m-2s+t+ε)q^{\min\{\lfloor t/2\rfloor,s\}}\\
=&\begin{cases} \frac{l-1}{2} & \tif s=0\\
1+q+\ldots+q^{\lfloor l/2\rfloor} & \tif l < 2s\\
1+q+\ldots+q^s+\frac{1}{2}(l-2s-1)q^s & \totherwise. \end{cases}\end{aligned}
\end{equation}

Then
\begin{equation}\label{casea1}
σ(s)+σ(s-1)=\begin{cases}
2(1+\ldots+q^{\lfloor l/2 \rfloor}) & \tif l < 2s\\
2(1+\ldots+q^{s-1})+(\frac{l-1}{2} - s)e_s & \totherwise.\end{cases}
\end{equation}
(For $s=0$, the formula yields $σ(0)+σ(-1)=σ(0)=(l-1)/2$.)

\begin{lem}
There is an equality $\len \mcZ_s(z)=σ(s)+σ(s-1)$. In particular,
$$\len(Δ\cap Δ_x) = \sum_{s=0,\ s\equiv v(j)}^{v(j)} \len \mcZ_s(z)\ =\ \sum_{s=0}^{v(j)} σ(s)$$
and so the AFL is proven if $l<k$.
\end{lem}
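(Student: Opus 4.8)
The plan is to prove the pointwise identity $\len\mcZ_s(z) = σ(s)+σ(s-1)$ for every $s$ in the range $0\le s\le v(j)$, and then to sum it over $s$. For the pointwise identity I would match the two formulas already derived: the Gross--Keating output \eqref{eqGK} for $\len\mcZ_s(z)$ against the computation \eqref{casea1} of $σ(s)+σ(s-1)$. The hypothesis $l<k$ of Case~A deletes the third alternative of \eqref{eqGK} (which requires $k<l$), and since $l$ is odd while $2s$ is even, the two remaining ranges $l<2s$ and $l>2s$ are exhaustive; note also that it is exactly the condition $l<k$ that makes \eqref{eqGK} apply with the single parameter $l$, since then the $\varpi$-adic order of $a+\varpi b$ relative to every order $\mcO_s$ equals $l$. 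In each of the two ranges the comparison is bookkeeping: rewrite $a(j)=2(1+q+\dots+q^{j-1})+q^{j}$ straight from the definition, substitute $e_s=q^{s}+q^{s-1}$ for $s\ge 1$, and check that the two expressions coincide. As a safeguard I would also re-derive $σ(s)$ from \eqref{sigma} by pairing $t=2j$ with $t=2j+1$: the $t$-independent part of the weight cancels in the alternating sum, leaving $σ(s)=\sum_{j}q^{\min\{j,s\}}$, which makes all constants transparent.

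The one point that deserves genuine attention is the endpoint $s=0$: there the component $\mcZ_0\iso\Spf\mcO_{\breve F}$ sits over $\mcN_1$ and so is unramified, $e_0=1$; and one needs $σ(-1)=0$, which holds because the range constraint $0\le s$ forces $α(s,t)=0$ for $s<0$, so that $σ(0)+σ(-1)=σ(0)$ still matches $\len\mcZ_0(z)$. Granting the pointwise identity, the remainder is formal. By \eqref{decomp}, $\len(Δ\cap Δ_x)=\sum\len\mcZ_s(z)=\sum\bigl(σ(s)+σ(s-1)\bigr)$, the sum being over the integers in $[0,v(j)]$ congruent to $v(j)$ modulo $2$. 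The pairs $\{s-1,s\}$ appearing here are consecutive and non-overlapping and cover $\{0,1,\dots,v(j)\}$ --- together with $s=-1$ when $v(j)$ is even, which contributes nothing since $σ(-1)=0$ --- so the sum collapses to $\sum_{s=0}^{v(j)}σ(s)$, which is the chain of equalities in the Lemma. Finally, combining \eqref{derivedInt} with the definition \eqref{sigma} and the vanishing of $α(s,t)$ outside $0\le s\le v(j)$ gives $-ω(y)\partial O_y(1_{\mfs(\mcO_{F_0})})=\log(q)\sum_{s=0}^{v(j)}σ(s)=\log(q)\len(Δ\cap Δ_x)$, which is Conjecture~\ref{conjafllie} for $(x,y)$; so the AFL is verified when $l<k$.

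In summary, all the substance is in one step --- the termwise agreement of \eqref{eqGK} with \eqref{casea1}, two formulas obtained by wholly independent means (a Gross--Keating length computation on one side, an orbital-integral count on the other) --- and the main obstacle is keeping the constants correct across the cases $l<2s$, $l>2s$ and at the degenerate value $s=0$. The telescoping of the $σ(s)$ and the identification with the derived orbital integral are routine rearrangements of finite sums.
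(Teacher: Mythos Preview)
Your proposal is correct and follows essentially the same approach as the paper. The paper's proof is a single sentence (``This is a combination of \eqref{eqGK}, the decomposition \eqref{decomp} and \eqref{derivedInt}''), and you have simply spelled out that comparison in detail, including the observation $a(j)=2(1+\dots+q^{j-1})+q^j$, the handling of the endpoint $s=0$ via $σ(-1)=0$ (which the paper also notes parenthetically just before the Lemma), and the telescoping of the sum over $s\equiv v(j)\pmod 2$ into $\sum_{s=0}^{v(j)}σ(s)$.
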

\begin{proof}
This is a combination of \eqref{eqGK}, the decomposition \eqref{decomp} and \eqref{derivedInt}.
\end{proof}

\textbf{Case B: $k<l$}\\
(Again we make the basic assumptions $0\leq s\leq v(j)$ and $0\leq t$.)\\
If $t\leq k$, we apply the same arguments as in Case 1 and get
$$α(s,t) = q^{\min\{\lfloor t/2\rfloor,s\}}.$$
If $k<t\leq l$, we have to solve
$$(1- \star/τa)^2\in 1+π^{t-k}\mcO_{F_0}.$$
There are two classes of square $1$ in $\mcO_{F_0}^\times/(1+π^{t-k}\mcO_{F_0})$. The class of $1$ always contributes
$$q^{\min\{k/2,s\}}.$$
The class of $-1$ only contributes if $t-s\leq k/2$. Namely if $\star$ solves
$$(-1+\star/τa)^2 \in 1+π^{t-k}\mcO_{F_0},$$
then $v(\star)=k/2$. In this case, there are $q^{k/2}$ solutions for $\star$ and we arrive at
\begin{equation}\label{twoA}
α(s,t)=\begin{cases} q^{\min\{k/2,s\}}+q^{k/2} & \tif t\leq s+k/2\\
q^{\min\{k/2,s\}} & \totherwise.\end{cases}
\end{equation}

Finally assume $k<l<t$ and let $β:=πN(b)/(τa)^2$. We have to solve
$$(1- \star/τa)^2\in 1+β+π^{t-k}\mcO_{F_0}.$$
The class of $1+\beta$ in
$$\mcO_{F_0}^\times/(1+π^{t-k}\mcO_{F_0})$$
has two square roots. A solution for $\star$ from the class near $1$ has to have valuation $v(β)+v(τa) = l-k/2$. In particular there are no solutions if $l-k/2 < t-s$. If instead $t\leq s+l-k/2$, then we get $q^{k/2}$ solutions for $\star$ modulo $π^t\mcO_{F_0}$.

With similar arguments, the class close to $-1$ contributes $q^{k/2}$ if and only if $t-s\leq k/2$. We get
\begin{equation}\label{twoB}
α(s,t)=\begin{cases} 2q^{k/2} & \tif t\leq s+k/2.\\
q^{k/2} & \totherwise.\end{cases}
\end{equation}

Again we compute $σ(s)+σ(s-1)$ and compare this number to the contribution of $\mcZ_s$ in \eqref{decomp}. We have
\begin{equation}\label{eqsigma}
\begin{aligned}
σ(s)&=\sum_{t=0}^k(-1)^{t+1}(2m-2s+t+ε)q^{\min\{\lfloor t/2 \rfloor, s\}}\ \ \ \ &(=:A(s))\\
 & + \sum_{t=k+1}^l (-1)^{t+1}(2m-2s+t+ε)q^{\min\{k/2, s\}}&(=:B(s))\\
 & + \sum_{t=k+1}^{s+k/2} (-1)^{t+1}(2m-2s+t+ε) q^{k/2}&(=:C(s))\\
 & + \sum_{t=l+1}^{l+s-k/2} (-1)^{t+1}(2m-2s+t+ε)q^{k/2}&(=:D(s)).
\end{aligned}
\end{equation}
Let us denote the four sums by $A(s),B(s),C(s)$ and $D(s)$ (from top to bottom). The sum $C$ corresponds to the first case in \eqref{twoA} and \eqref{twoB}.

\begin{lem}
There is an equality $\len \mcZ_s(z)=σ(s)+σ(s-1)$. In particular,
$$\len(Δ\cap Δ_x) = \sum_{s=0,\ s\equiv v(j)}^{v(j)} \len \mcZ_s(z)\ =\ \sum_{s=0}^{v(j)} σ(s)$$
and so the AFL is proven if $k<l$.
\end{lem}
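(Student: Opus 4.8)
The plan is to mirror the argument of Case A: evaluate $σ(s)$ in closed form from the four-term decomposition \eqref{eqsigma}, then compute $σ(s)+σ(s-1)$ and match it against the Gross--Keating output \eqref{eqGK}. First I would record what \eqref{eqGK} says in the present situation. Since $k=v(a^2)$ is even and we are in Case B ($k<l$), the first case of \eqref{eqGK} cannot occur (it would need $l<2s\leq k<l$), the second occurs exactly when $2s\leq k$ (as then $l>k\geq 2s$), and the third exactly when $2s>k$; so
$$\len\mcZ_s(z)=\begin{cases}a(s-1)+q^{s-1}+\big(\tfrac{l+1}{2}-s\big)e_s & \tif 2s\leq k,\\ a(k/2) & \tif 2s>k.\end{cases}$$
The goal is thus to show that $σ(s)+σ(s-1)$ equals this right-hand side, case by case in $2s\leq k$ versus $2s>k$.

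The computational engine is the observation that each of $A(s),B(s),C(s),D(s)$ in \eqref{eqsigma} has the shape $\sum_t (-1)^{t+1}(2m-2s+t+ε)\,c_t$ with the weight $t\mapsto c_t$ constant on each pair $\{2i,2i+1\}$ — it is $q^{\min\{\lfloor t/2\rfloor,s\}}$ in $A(s)$ and a constant in $B(s)$, $C(s)$, $D(s)$. Pairing consecutive terms makes each such sum telescope: a complete pair $\{2i,2i+1\}$ contributes $+c_{2i}$ and kills the constant shift $2m-2s+ε$, so the only $m$-dependence sits in the unpaired boundary terms. As in \eqref{aplusb}, I expect the unpaired term of $A(s)$ to cancel the $m$-linear part of the unpaired term of $B(s)$, leaving a contribution $(l-k)q^{\min\{k/2,s\}}$, and likewise the unpaired terms of $C(s)$ and $D(s)$ to combine into $(k-l)q^{k/2}$; after this, $σ(s)$ depends only on $k,l,s$. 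Evaluating the residual pure geometric sums $\sum q^{\min\{\lfloor t/2\rfloor,s\}}$, $\sum q^{\min\{k/2,s\}}$ and $\sum q^{k/2}$ then produces a closed form for $σ(s)$, the one genuine case split again being $2s\leq k$ versus $2s>k$: this decides whether the minima are saturated and whether the ranges of $C(s)$ and $D(s)$ are empty.

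Finally I would add $σ(s)+σ(s-1)$. Here the geometric partial sums $1+q+\dots+q^{j}$ for consecutive $s$ recombine into the symmetric quantities $a(\cdot)$, and the residual linear-in-$s$ terms assemble into the factor $e_s=q^s+q^{s-1}$; comparison with the displayed form of \eqref{eqGK} above then yields $\len\mcZ_s(z)=σ(s)+σ(s-1)$, after separately checking the small values $s=0$ (with $σ(-1)=0$ and $e_0=1$) and the transition $s=k/2$, $s=k/2+1$. The ``in particular'' is then formal: summing $\len\mcZ_s(z)=σ(s)+σ(s-1)$ over $s\equiv v(j)$ with $0\leq s\leq v(j)$, the indices $s$ and $s-1$ together exhaust $\{0,1,\dots,v(j)\}$ exactly once, so by \eqref{decomp} one gets $\len(Δ\cap Δ_x)=\sum_{s=0}^{v(j)}σ(s)$, which by \eqref{derivedInt}--\eqref{sigma} equals $-ω(y)\partial O_y(1_{\mfs(\mcO_{F_0})})/\log q$, i.e.\ the AFL. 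I expect the main obstacle to be purely the bookkeeping: the two-regime case analysis, the careful evaluation of the $\min$-weighted geometric sums, and recognizing the $a(\cdot)$ and $e_s$ patterns on the nose — conceptually nothing beyond Case A, but with more sums and more boundary cases to keep straight.
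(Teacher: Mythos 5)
Your proposal takes essentially the same route as the paper: both proceed by case-splitting on $2s\leq k$ versus $k<2s$, exploiting that each of $A,B,C,D$ in \eqref{eqsigma} is an alternating sum with weights constant on pairs $\{2i,2i+1\}$ so that the $(2m-2s+\varepsilon)$-shift cancels pairwise, evaluating $\sigma(s)+\sigma(s-1)$, and matching against the second and third branches of \eqref{eqGK}; and the ``in particular'' step is exactly the observation that $s$ and $s-1$ exhaust $\{0,\dots,v(j)\}$ once each, with $\sigma(-1)=0$. The only cosmetic difference is the regrouping of summands — the paper sets $E(s)=B(s)+D(s)$ and verifies $C(s)+C(s-1)+E(s)+E(s-1)=(4m-4s+3+2k+2\varepsilon)q^{k/2}$ when $k/2<s$, whereas you pair $A$-with-$B$ and $C$-with-$D$ boundary terms — but this is bookkeeping, not a different argument.
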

\begin{proof} The formula for $\len \mcZ_s(z)$ was given in \eqref{eqGK}. Here we compute $σ(s)+σ(s-1)$.

\emph{Case when $s\leq k/2$:} Here the sums $C$ and $D$ are empty. In the summand of $B$ we always have $\min\{k/2,s\} = s$. Then $(A+B)(s)$ was computed in \eqref{aplusb} and $σ(s)+σ(s-1)$ is given by \eqref{casea1}. The length $\len \mcZ_s(z)$ is given by the second case of \eqref{eqGK}.

\emph{Case when $k/2 < s$:}
First we compute that
$$A(s)+A(s-1) = 2(1+\ldots+q^{k/2-1})+q^{k/2} - (4m-4s+3+2k+2ε)q^{k/2}.$$
Note that in the sum $B$, $\min\{k/2,s\}=k/2$. We let $E(s):=B(s)+D(s)$ so that
$$E(s)= \sum_{t=k+1}^{l+s-k/2}(-1)^{t+1}(2m-2s+t+ε)q^{k/2}.$$
It is now a direct calculation to verify that
$$C(s)+C(s-1)+E(s)+E(s-1) = (4m-4s+3+2k+2ε)q^{k/2}$$
which implies (third case of \eqref{eqGK}) that
$$σ(s)+σ(s-1)=2(1+\ldots+q^{k/2-1})+q^{k/2} = \len \mcZ_s(z).$$

We give some formulas:
$$E(s)=\begin{cases}-\frac{1}{2}(l+s-3k/2)q^{k/2} & \tif s\not\equiv k/2\ \ (2)\\
(2m-2s+k+1+ε)q^{k/2} + \frac{1}{2}(l+s-3k/2-1)q^{k/2} & \tif s\equiv k/2\ \ (2).\end{cases}$$
Similarly
$$C(s)=\begin{cases}(2m-2s+k+1+ε)q^{k/2}+\frac{1}{2}(s-k/2-1)q^{k/2} & \tif s\not\equiv k/2\ \ (2)\\
-\frac{1}{2}(s-k/2)q^{k/2} & \tif s\equiv k/2\ \ (2).\end{cases}$$

This concludes the proof of the AFL for $n=3$.
\end{proof}

\vspace{1cm}
\address{Andreas Mihatsch\\Mathematisches Institut\\der Universität Bonn\\Endenicher Allee 60\\53115 Bonn\\mihatsch@math.uni-bonn.de}
\end{document}